\documentclass[a4paper]{article}
\usepackage[UTF8, scheme=plain]{ctex}
\usepackage{amsmath,amsthm,amsfonts,mathrsfs}
\usepackage{amssymb}
\usepackage{extarrows}
\usepackage{tikz-cd}
\usepackage{xspace}
\usepackage{graphicx,booktabs}
\usepackage{subcaption}
\usepackage{paralist}
\usepackage{authblk}
\usepackage{hyperref}
\hypersetup{
    pdfmenubar=true,       % show Acrobat's menu?
    pdffitwindow=false,     % window fit to page when opened
    pdfstartview={FitH},    % fits the width of the page to the window
    colorlinks=true,       % false: boxed links; true: colored links
    linkcolor=red,          % color of internal links
    citecolor=red,        % color of links to bibliography
    filecolor=magenta,      % color of file links
    urlcolor=cyan,           % color of external links
}
\usepackage{acro}
\usepackage{cleveref}
\usepackage{algorithm}
\usepackage{algpseudocode}
\makeatletter
\def\BState{\State\hskip-\ALG@thistlm}
\makeatother
\Crefname{equation}{}{}
\usepackage{todonotes}
\usepackage{inputenc}
\usepackage{dsfont}
\usepackage{amsbsy}
\usepackage{enumerate}
\usepackage{mathtools}
\usepackage{xfrac}

\numberwithin{figure}{section}
\usepackage[export]{adjustbox} % export 选项让 includegraphics 支持对齐参数

\newtheorem{definition}{Definition}[section]
\newtheorem{theorem}{Theorem}[section]

\newtheorem{lemma}[theorem]{Lemma}
\newtheorem{assumption}[definition]{Assumption}
\newtheorem{myexample}{Example}[section]
\newtheorem{remark}[theorem]{Remark}
\numberwithin{figure}{section}

\numberwithin{equation}{section}

\newcommand{\rmd}{\mathrm{d}}
\newcommand{\bbR}{\mathbb{R}}
\newcommand{\caL}{\mathcal{L}}

\newcommand{\propod}{\mathcal{P}_{\mathrm{pod}}}

\newcommand{\keywords}[1]{%
  \par\vspace{0.5\baselineskip}%
  \noindent\textbf{Keywords:} #1%
  \par\vspace{0.5\baselineskip}%
}

\title{A Pseudo-time Data-Driven Framework for Model Reduction of Linear Operator Equations}

\author[1]{Zhentong Wei}
\author[2]{Tingen Xiong}
\author[2]{Wenlong Zhang}
\author[1,3]{Zhiwen Zhang}

\affil[1]{Materials Innovation Institute for Life Sciences and Energy (MILES), Shenzhen, People's Republic of China}
\affil[2]{Department of Mathematics \& National Center for Applied Mathematics Shenzhen, Southern University of Science and Technology (SUSTech), 1088 Xueyuan Boulevard, Shenzhen, Guangdong Province, People's Republic of China}
\affil[3]{Department of Mathematics, The University of Hong Kong, Pokfulam Road, Hong Kong SAR, People's Republic of China}

\date{}

\begin{document}
\begin{sloppypar}

\maketitle
% \begin{center}
% \textbf{Corresponding author:}
% \href{mailto:zhangwl@sustech.edu.cn}{zhangwl@sustech.edu.cn}; \href{mailto:zhangzw@hku.hk}
% {zhangzw@hku.hk};\\
% \textbf{Contributing authors:} \href{mailto:weizhentong@lsec.cc.ac.cn}{weizhentong@lsec.cc.ac.cn}
% \end{center}

\begin{abstract}
This paper proposes a novel Pseudo-time Proper Orthogonal Decomposition (POD) framework to enable model reduction for stationary problems lacking temporal snapshot data. By recasting static operator equations into a pseudo-dynamic evolution form, we artificially generate temporal data while preserving the system's intrinsic spectral properties. Mathematically, we rigorously prove the exponential convergence of the pseudo-time trajectory to the exact stationary solution. Furthermore, the approximation properties of the generated POD basis functions are rigorously established. Finally, the universality and accuracy of the framework are validated both theoretically and numerically across two representative settings: 
% linear algebraic equations, 
elliptic inverse source problems and Fredholm integral equations of the first kind.
\end{abstract}

\keywords{Linear operator equation, Model reduction method, Pseudo-time evolution, Convergence analysis}

% REQUIRED
% \begin{keywords}
%   Linear operator equation, Model reduction method,
% \end{keywords}

% REQUIRED
% \begin{MSCcodes}
% 68Q25, 68R10, 68U05
% \end{MSCcodes}

\section{Introduction}
Linear operator equations of the form $u = \mathcal{R} f$ constitute a cornerstone of modern applied mathematics \cite{engl1996regularization,hansen2010discrete}. These equations serve as the foundational mathematical models for a vast array of applications across science and engineering. This spectrum includes large-scale linear algebraic systems arising from the discretization of partial differential equations \cite{saad2003iterative}, geophysical inverse problems such as seismic tomography \cite{tarantola2005inverse}, pollution source control \cite{gorelick1983identifying}, and medical imaging techniques like X-ray computed tomography \cite{natterer2001mathematics}. Furthermore, they are indispensable in image processing for restoration \cite{vogel2002computational,bertero2021introduction} and optical remote sensing \cite{doicu2010numerical}. Solving these equations efficiently, particularly when the underlying Hilbert spaces are high-dimensional or the operators are ill-posed, remains a significant computational challenge.

To mitigate these computational demands, Model Order Reduction (MOR) has emerged as a powerful paradigm for constructing efficient, low-rank approximations of high-dimensional systems. Among various MOR techniques, the Proper Orthogonal Decomposition (POD) method is particularly prominent \cite{benner2015survey}. Originally derived in \cite{lumley1967structure} to describe coherent structures in turbulent flows, POD gained widespread attention only after the method of snapshots was introduced in \cite{sirovich1987turbulence}, providing a numerically tractable implementation for large-scale engineering datasets. This framework was further formalized in \cite{berkooz1993proper}, establishing POD as a cornerstone for capturing dominant spatial features in complex systems.

Over the past decades, POD has been successfully applied to a broad spectrum of time-evolution problems. Its versatility is demonstrated in fluid dynamics, where it has been used to reduce the Navier-Stokes equations \cite{kunisch2002galerkin} and the shallow water equations \cite{cstefuanescu2015pod}. In structural mechanics, POD has been employed to capture the dynamics of nonlinear vibrations \cite{lall2003structure}. The method's applicability extends to complex nonlinear PDEs, such as the FitzHugh-Nagumo equations \cite{chaturantabut2010nonlinear}, as well as the viscous G-equations \cite{gu2021error} and Hamilton–Jacobi–Bellman (HJB) equations \cite{kunisch2004hjb}. Furthermore, POD has become an indispensable tool in optimal control \cite{alla2013time}, parameter estimation \cite{willcox2006unsteady}, and Uncertainty Quantification (UQ) \cite{galbally2010non}. For a comprehensive review of these model reduction strategies, we refer the reader to \cite{benner2015survey,quarteroni2015reduced}.

Despite its success, a conceptual mismatch exists between POD and stationary operator equations. The POD framework is intrinsically designed to extract an optimal low-dimensional subspace from an ensemble of snapshots reflecting the system's transient dynamics \cite{kunisch2002galerkin}. Consequently, a significant challenge arises when addressing stationary linear equations: the static nature of the problem typically yields only a unique equilibrium state. This inherent paucity of data precludes the construction of a sufficiently diverse snapshot matrix, which is essential for identifying the dominant modes that span the solution space.

In this paper, we propose a pseudo-time data-driven POD framework that bridges the gap between stationary operator equations and dynamic reduction methods. A crucial distinction exists between our framework and the classical Showalter's method (or asymptotic 
regularization) \cite{engl1996regularization}. While Showalter's method introduces a temporal variable to regularize the solution of ill-posed inverse problems, our approach introduces pseudo-time as a mechanism to embed temporal diversity into the state data. Essentially, whereas the former focuses on the asymptotic stability of the solution process, our Pseudo-time POD framework focuses on generating a rich snapshot set from a stationary operator. 

To implement this, we recast the static equation $\mathcal{A}u =  f$, where $\mathcal{A}=\mathcal{R}^{-1}$, into a pseudo-dynamic evolution form:  
\begin{equation}
  \begin{aligned}
        \left\{\begin{array}{ll}
             \hat{u}_t + \mathcal{A} \hat{u} = f &\text { in }~ \Omega \times (0, T), \\[2mm] 
            \hat{u}(\cdot,0) = 0 &\text { in }~ \Omega,
            % u(x,0)= 0 &\text { in }~ \Omega.
        \end{array}\right.
    \end{aligned}  
\end{equation}
where $\Omega \subset \bbR^d$ ($d=1,2,3$) is a bounded domain.
Notably, by directly utilizing the original physical operator $\mathcal{A}$ rather than the normal operator $\mathcal{R}^*\mathcal{R}$, we preserve the intrinsic spectral properties of the system while artificially constructing the time-series snapshots required for POD. This strategy enables high-fidelity model reduction even for snapshot-deficient stationary systems. In practical scenarios, where 
$f$ is unknown, we leverage the observable data $m$ following the method in
\cite{zhang2025novel}, formulating the following system to extract the POD basis:
\begin{equation}
    \begin{aligned}
        \left\{\begin{array}{ll}
            \widetilde{u}_{t} + \mathcal{A} \widetilde{u} =  m &\text { in }~ \Omega \times (0, T), \\[2mm] 
            \widetilde{u}(\cdot,0) = 0 &\text { in }~ \Omega.\end{array}\right.
        \end{aligned}
\end{equation}

Mathematically, we establish a rigorous theoretical foundation for this pseudo-time data-driven POD framework. Under the standard assumptions that the operator 
$\mathcal{R}$ is compact, self-adjoint, and injective, we prove in \Cref{err_gradient_flow} that the solution of the pseudo-dynamic evolution equation converges asymptotically to the exact stationary solution. Furthermore, the convergence of the POD basis functions and their corresponding approximation properties are rigorously established in \Cref{err_ture_solution}. To demonstrate the universality and robustness of this framework, we apply it to two representative classes of problems:
% spanning both discrete and continuous settings: linear algebraic equations, 
 elliptic inverse source problems and Fredholm integral equations of the first kind.

The remainder of this paper is organized as follows. \Cref{sec_pod_frame} introduces the underlying linear operator equation and systematically develops the pseudo-time data-driven POD framework, detailing the corresponding theoretical error estimates. In \Cref{sec_pod_application}, we establish the broad applicability of this framework by applying it to the two representative problems mentioned above. \Cref{sec_pod_num} presents comprehensive numerical experiments to validate the accuracy and effectiveness of our approach. Finally, \Cref{sec_pod_concl} concludes the paper with a summary of our findings and a discussion of potential future research directions.

% \textbf{Contributions.} 

% \textbf{Outline.} 

\section{Linear Operator Equation}\label{sec_pod_frame}

% \section{Gradient Flow Method}

Consider the linear operator equation
\begin{equation}\label{eq_forward}
u = \mathcal{R} f,
\end{equation}
where $f \in X$ is the unknown to be recovered, $u \in Y$ is the observed data, and $\mathcal{R}: X \to Y$ is a bounded linear operator. The spaces $X$ and $Y$ are Hilbert spaces equipped with inner products $(\cdot,\cdot)_X, (\cdot,\cdot)_Y$ and their induced norms $\|\cdot\|_X, \|\cdot\|_Y$, respectively.

To recover $f$ from the observed data $u$, we seek a mapping from $Y$ to $X$. Given that $\mathcal{R}$ is injective (i.e.， $\ker(\mathcal{R}) = \{0\}$), the operator $\mathcal{R}$ is a bijection from $X$ onto its range $\operatorname{Ran}(\mathcal{R})$. Consequently, the inverse operator $\mathcal{A} := \mathcal{R}^{-1}$ is well-defined on $\operatorname{Ran}(\mathcal{R})$, enabling the unique recovery of $f$ via
\begin{equation}\label{eq_inverse}
    f=\mathcal{A} u.
\end{equation}

Here, our primary objective is to efficiently solve \eqref{eq_forward} using the Proper Orthogonal Decomposition (POD) method. We develop a two-step framework consisting of a dynamic formulation via pseudo-time embedding (\Cref{sec_pseudo_time}) followed by the construction of a reduced-order model based on data-driven snapshots (\Cref{sec_pod_construct}).

\subsection{Pseudo-Time Scheme}\label{sec_pseudo_time}
The application of the POD method fundamentally relies on a collection of data snapshots to extract a low-dimensional subspace. However, the original problem \eqref{eq_forward} only yields a single stationary state, which is insufficient to provide the rich dataset required for POD. To circumvent this limitation, we embed the problem \cref{eq_inverse} into a dynamic framework. By introducing a pseudo-time variable $t$ and treating $u$ as a time-dependent state, we construct the following evolution equation to model a temporal trajectory:
% we construct the following conceptual evolution equation to theoretically model a temporal trajectory:
\begin{equation}\label{eq_evolution}
  \begin{aligned}
        \left\{\begin{array}{ll}
             \hat{u}_t + \mathcal{A} \hat{u} = f &\text { in }~ \Omega \times (0, T), \\[2mm] 
            \hat{u}(\cdot,0) = 0 &\text { in }~ \Omega.
            % u(x,0)= 0 &\text { in }~ \Omega.
        \end{array}\right.
    \end{aligned}  
\end{equation}

To rigorously characterize the asymptotic behavior of the solution $\hat{u}(\cdot,t)$ to the evolution equation \eqref{eq_evolution} and its convergence to the solution $u$ of the equation \eqref{eq_inverse} for a given $f$, we assume that $X=Y$ is a real or complex Hilbert space and impose the following assumptions on the operator $\mathcal{R}$.
% To begin our study, we need to make certain assumptions about the operator $\mathcal{R}$.
\begin{assumption}\label[assumption]{compact_self_adjoint}
    The operator $\mathcal{R}\in \mathcal{L}(X)$, where $\mathcal{L}(X)$ denotes the space of bounded linear operators on $X$, is compact, self-adjoint, and injective. Its spectral decomposition is governed by the eigenvalue problem:
    \begin{equation}\label{eq_eig_problem}
        \mathcal{R} \phi = \frac{1}{\mu} \phi \quad \text{with} \quad \phi_{\partial \Omega} =0.
    \end{equation}
    There exists a countable sequence of positive eigenvalues $1/\mu_1\geq 1/\mu_2\geq \cdots > 0$ (with $\mu_k \rightarrow \infty$ as $k\rightarrow \infty$), and the corresponding eigenfunctions
    $\{ \phi_k\}_{k=1}^{\infty}$ form an orthonormal basis of $X$. Consequently, the inverse operator 
    $\mathcal{A}=\mathcal{R}^{-1}$ possesses eigenvalues $0< \mu_1 \leq \mu_2 \leq \cdots$, sharing the same eigenfunctions $\{ \phi_k\}_{k=1}^{\infty}$.    
\end{assumption}

Following \Cref{compact_self_adjoint}, the $f$ and $u$ in problem \cref{eq_forward} can be expressed in terms of the eigenfunctions $\{ \phi_k\}_{k=1}^{\infty}$ of \cref{eq_eig_problem} as
 \begin{equation}\label{eq_expansion}
     f=\sum_{k=1}^{\infty} f_k \phi_k \quad \text{and}\quad u=\sum_{k=1}^{\infty} u_k \phi_k,
 \end{equation}
where $f_k=(f,\phi_k)_{X}$ and $u_k=(u,\phi_k)_{X}$. Substituting these expansions into \cref{eq_forward}, we obtain
\begin{equation}\label{eq_forward_solution}
    u= \sum_{k=1}^{\infty} \frac{f_k}{\mu_k} \phi_k.
\end{equation}
Analogously, for the evolution equation \cref{eq_evolution}, we expand the time-dependent state $
\hat{u}(\cdot,t)\in X$ in the same orthonormal basis:
\begin{equation}
    \hat{u}(\cdot, t)=\sum_{k=1}^{\infty}\hat{u}_k(t)\phi_k,
\end{equation}
where $\hat{u}_k(t)=(\hat{u}(\cdot,t),\phi_k)_{X}$. By substituting this into \cref{eq_evolution} and solving the corresponding decoupled ODEs with the initial condition $\hat{u}_k(0)=0$, we arrive at
\begin{equation}\label{eq_evolution_solution}
    \hat{u}(\cdot,t)=\sum_{k=1}^{\infty}\frac{1}{\mu_k}(1-e^{-\mu_k t})f_k\phi_k
\end{equation}

Comparing \cref{eq_evolution_solution} with \cref{eq_forward_solution}, it is mathematically evident that the exponential term $e^{-\mu_k t}$ decays to zero as $t\rightarrow \infty$. This implies that the dynamic trajectory $\hat{u}(\cdot,t)$ asymptotically converges to the stationary solution $u$ of \cref{eq_forward}. In this process, the evolution equation \cref{eq_evolution} acts as a temporal relaxation towards the equilibrium state defined by $\mathcal{A}u = f$. This asymptotic convergence is rigorously quantified by the following theorem.

\begin{theorem}\label{err_gradient_flow}
    Let $u$ be the solution to the problem \cref{eq_inverse} and $\hat{u}(\cdot,t)$ be the solution to the problem \cref{eq_evolution} for a given $f\in X$. Then, for any $T>0$, the following error estimates hold:
    \begin{equation}
        \| \hat{u}(\cdot,T) - u \|_{X} \leq \frac{e^{-\mu_1 T}}{\mu_1} \| f \|_{X}\quad \text{and}\quad \|\hat{u}_t(\cdot,T)|\vert_X \leq e^{-\mu_1 T}\|f|\vert_X.
    \end{equation}
\end{theorem}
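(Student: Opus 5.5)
We will work entirely in the eigenbasis $\{\phi_k\}_{k=1}^{\infty}$ supplied by \Cref{compact_self_adjoint}, using the explicit representations \cref{eq_forward_solution} and \cref{eq_evolution_solution}. Subtracting the two expansions coefficientwise, the error at time $T$ is
\begin{equation*}
    \hat{u}(\cdot,T)-u=-\sum_{k=1}^{\infty}\frac{e^{-\mu_k T}}{\mu_k}f_k\phi_k .
\end{equation*}
Because $\{\phi_k\}$ is an orthonormal basis of $X$, Parseval's identity gives
\begin{equation*}
    \|\hat{u}(\cdot,T)-u\|_X^2=\sum_{k=1}^{\infty}\frac{e^{-2\mu_k T}}{\mu_k^2}|f_k|^2 .
\end{equation*}
The modulus $|f_k|$ covers the complex case as well.

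The key step is a uniform bound on the multiplier $g(\mu)=e^{-\mu T}/\mu$ over the spectrum. For $T>0$ this function is decreasing in $\mu>0$, and since $0<\mu_1\le\mu_k$ for all $k$, we get $g(\mu_k)\le g(\mu_1)=e^{-\mu_1T}/\mu_1$. Pulling this constant out of the sum and applying Parseval again ($\sum|f_k|^2=\|f\|_X^2$) yields the first estimate after taking square roots.

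For the second estimate, we differentiate \cref{eq_evolution_solution} termwise in $t$ to obtain $\hat{u}_t(\cdot,T)=\sum_k e^{-\mu_k T}f_k\phi_k$. Since $e^{-\mu T}$ is likewise decreasing in $\mu$, the same Parseval argument with multiplier $e^{-\mu_k T}\le e^{-\mu_1T}$ gives $\|\hat{u}_t(\cdot,T)\|_X\le e^{-\mu_1T}\|f\|_X$.

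The only point that needs care is justifying the termwise differentiation in $X$, i.e.\ that $\hat{u}$ is differentiable with the stated derivative, and that the series are convergent in $X$. We will handle this as follows. For $t$ in a neighbourhood of $T$, say $t\ge T/2>0$, the coefficients $e^{-\mu_k t}f_k$ are dominated by $|f_k|$, which is square-summable. So the differentiated series converges in $X$ uniformly in $t$, and this legitimizes the interchange. Alternatively, we can note directly that $\hat{u}_t=f-\mathcal{A}\hat{u}$ from \cref{eq_evolution}, compute this combination coefficientwise, and obtain the same expansion without differentiating the series. Beyond this point the argument is routine, so we expect no genuine obstacle.
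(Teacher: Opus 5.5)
Your proposal is correct and matches the paper's proof essentially step by step. Like the paper, you expand the error in the eigenbasis, apply Parseval, and use that $e^{-\mu T}/\mu$ and $e^{-\mu T}$ are decreasing in $\mu$ to bound every multiplier by its $k=1$ value. For the derivative bound, the paper takes exactly your alternative route, writing $\hat{u}_t = f-\mathcal{A}\hat{u}=\mathcal{A}(u-\hat{u})$ and computing it coefficientwise. Your justification of termwise differentiation is a valid but unnecessary extra.
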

\begin{proof}
    First, using \cref{eq_evolution_solution} and \cref{eq_forward_solution}, we have
    \begin{equation}
            % \begin{aligned}
            % \|e(T)|\vert_X&=
            \|\hat{u}(\cdot,T)-u\|_X =
            \Big \|\sum_{k=1}^\infty\frac{1}{\mu_k}(1-e^{-\mu_k T})f_k\phi_k-\sum_{k=1}^\infty\frac{f_k}{\mu_k}\phi_k\Big \|_X
            =\Big \|\sum_{k=1}^\infty\frac{e^{-\mu_k T}}{\mu_k}f_k\phi_k\Big \|_X.
            % &\leq  \frac{e^{-\mu_1 T}}{\mu_1}\Big \|\sum_{k=1}^\infty f_k\phi_k\Big \|_X\\[2mm]
            % &\leq \frac{e^{-\mu_1 T}}{\mu_1}\|f|\vert_X,
        % \end{aligned}
    \end{equation}
    Since $g(x) = \frac{e^{-xT}}{x}$ is strictly decreasing for $x>0$, we have $\frac{e^{-\mu_k T}}{\mu_k} \le \frac{e^{-\mu_1 T}}{\mu_1}$ for all $k\geq 1$. By Parseval's identity, it follows that
    \begin{equation}
        \|\hat{u}(\cdot,T)-u\|_X^2 = \sum_{k=1}^\infty \frac{e^{-2\mu_k T}}{\mu_k^2} |f_k|^2 \leq  \frac{e^{-2\mu_1 T}}{\mu_1^2} \sum_{k=1}^\infty |f_k|^2 = \frac{e^{-2\mu_1 T}}{\mu_1^2} \|f\|_X^2
    \end{equation}
    
    Next, substituting $\hat{u}_t = f - \mathcal{A}\hat{u}$ into the error equation and noting $f = \mathcal{A}u$, we obtain $\hat{u}_t = \mathcal{A}(u - \hat{u}(\cdot,t))$. Thus,
    % Next, subtracting $f=\mathcal{A} u^*$ from equation \cref{eq_evolution} yields the error estimate for $\|u_t(\cdot,T)|\vert_X$:
    \begin{equation}
        \begin{aligned}
            \|\hat{u}_t(\cdot,T)\|_X=\|\mathcal{A}\big(\hat{u}(\cdot,T)-u^*\big)|\vert_X
            =\Big \|\sum_{k=1}^\infty e^{-\mu_k T}f_k\phi_k \Big \|_X.
            % =\Big \|\sum_{k=1}^\infty e^{-\mu_k T} f_k\phi_k\Big \|_X
            % \leq e^{-\mu_1 T}\|f\|_X.
        \end{aligned}
    \end{equation}
    Applying Parseval's identity again and using $e^{-\mu_k T} \le e^{-\mu_1 T}$, we obtain
    \begin{equation}
        \begin{aligned}
            \|\hat{u}_t(\cdot, T)\|_X^2 = \sum_{k=1}^\infty e^{-2\mu_k T} |f_k|^2 \leq e^{-2\mu_1 T} \sum_{k=1}^\infty |f_k|^2 = e^{-2\mu_1 T} \|f\|_X^2.
        \end{aligned}
    \end{equation}
    This completes the proof.
\end{proof}

\subsection{Construction of the POD Basis Functions}\label{sec_pod_construct}
As established in \cref{sec_pseudo_time}, the primary motivation for the pseudo-time embedding is to construct a time-evolving trajectory $\{\hat{u}(\cdot, t) : t \in [0, T]\}$ that captures the essential features of the solution $u$. In the context of solving the operator equation $u = \mathcal{R}f$, a direct high-dimensional computation can be computationally prohibitive, particularly when the system is large-scale. By generating a temporal sequence of snapshots along the relaxation process towards the steady state $u$, we can apply the POD method to identify a low-dimensional subspace that effectively approximates the solution space. This approach allows us to represent the solution $u$ as a linear combination of a few dominant basis functions, thereby reducing the dimensionality of the problem while preserving its fundamental structure.

However, a critical challenge in constructing the POD basis is that the value of $f$  is unknown, which prevents the direct generation of the temporal snapshots $\{\hat{u}(\cdot, t): t \in [0, T]\}$ required for the POD method. To overcome this, we employ the adjoint POD approach proposed in \cite{zhang2025novel}. Specifically, we utilize the available measurement $m$ as a surrogate for the unknown $f$ to drive the evolution process. We then generate the required snapshots by solving the following data-driven evolution equation:

\begin{equation}\label{gradient_flow_adjoint}
    \begin{aligned}
        \left\{\begin{array}{ll}
            \widetilde{u}_{t} + \mathcal{A} \widetilde{u} =  m &\text { in }~ \Omega \times (0, T), \\[2mm] 
            \widetilde{u}(\cdot,0) = 0 &\text { in }~ \Omega.\end{array}\right.
        \end{aligned}
\end{equation}
By using the measurement $m$ to drive the evolution, we construct a surrogate trajectory $\{\widetilde{u}(\cdot, t): t \in [0, T]\}$ that captures the dominant modes of the solution space despite the lack of knowledge regarding $f$. The theoretical justification for this approach is detailed in \Cref{sec_conv_pod}.

To construct the discrete snapshot set，we partition the time interval uniformly with a step size $\Delta t = T/ M$, yielding discrete time instances $t_k = k \Delta t$. The snapshot set is formed by collecting both the state solutions and their difference quotients: let $\widetilde{y}_k = \widetilde{u}(\cdot, t_{k-1})$, $k=1,\ldots,M+1$, and 
$\widetilde{y}_k = \overline{\partial} \widetilde{u}(\cdot, t_{k-M-1})$, $k =M+2,\ldots, 2M+1$, where $\overline{\partial} \widetilde{u}(\cdot, t_{k})= \frac{\widetilde{u}(\cdot,t_k) - \widetilde{u}(\cdot,t_{k-1})}{\Delta t}$. 

With the snapshot set $\{\tilde{y}_k\}_{k=1}^{2M+1}$ prepared, we proceed to extract the low-dimensional POD subspace $V_{pod}$ as follows.

% Once this snapshot set $\{\widetilde{y}_k\}_{k=1}^{2M+1}$ is obtained, we apply the POD method to extract a low-dimensional subspace, denoted by $V_{pod}$. The specific implementation details proceed as follows.

Let $Y= \operatorname{span}\{\widetilde{y}_1,\ldots,\widetilde{y}_{2M+1}\}$ with $\operatorname{dim} Y =N$. We define the correlation matrix $K =(K_{i,j}) \in \bbR^{(2M+1)\times (2M+1)}$ by
\[
    K_{ij}=\frac{1}{2M+1}( \widetilde y_i,\widetilde y_j )_{X},\quad i,j=1,\dots,2M+1.
\]
By definition, the matrix $K$ is positive semi-definite and has rank $N$. We then solve the eigenvalue problem
\begin{equation}
    K \nu=\lambda \nu. 
\end{equation}
The resulting eigenvalues are sorted in descending order such that $\lambda_1\geq\lambda_2\geq\ldots\geq\lambda_{2M+1}\geq 0$, with the corresponding orthonormal eigenvectors denoted by $\nu_1, \nu_2, \dots,\nu_{2M+1}$. 

% Finally, we truncate the spectrum by retaining only the first $N_{pod}$ dominant eigenvalues, ensuring that $\lambda_1 \geq \dots \geq \lambda_{N_{pod}} > 0$. Using these retained eigenpairs, the corresponding POD basis functions (also known as POD modes) are computed by:
% \begin{equation}
%     \psi_k = \frac{1}{\sqrt{\lambda_k}} \sum_{j=1}^{2M+1} (\nu_k)_j \tilde{y}_j, \quad 1 \le k \le N_{pod}. \label{eq:pod_basis}
% \end{equation}
% Consequently, the reduced-order POD subspace is defined as the linear span of these basis functions, i.e., $V_{pod} = \text{span}\{\psi_1, \dots, \psi_{N_{pod}}\}$.

% Furthermore, using these eigenpairs, the POD basis functions (also known as POD modes) are computed as
% \begin{equation}
%     \psi_k=\frac{1}{\sqrt{\lambda_k}}\sum_{j=1}^{2M+1}(\nu_k)_j \widetilde{y}_j,\qquad 1\leq k\leq d
% \end{equation}
% Finally, by truncating the spectrum and retaining only the first $N_{pod}$ dominant modes ($N_{pod} \leq d$), we define the reduced-order POD subspace spanned by these basis functions as
% $V_{pod}=\operatorname{span}\{\psi_1,\cdots, \psi_{N_{pod}}\}$.

According to \cite[proposition 1]{kunisch2001galerkin}, if we denote the positive eigenvalues of 
$K$ by $\lambda_1 \geq \cdots \geq \lambda_{N} > 0$ and the associated eigenvectors by $v_1, \ldots, v_{N} \in \mathbb{R}^{2M+1}$, then a POD basis of rank $l \leq N$ is given by
\begin{equation}
\psi_k = \frac{1}{\sqrt{\lambda_k}} \sum_{j=1}^{2M+1} (v_k)_j \widetilde{y}_j,\quad k=1,\ldots,l,
\end{equation}
where $(v_k)_j$ is the $j$-th component of the eigenvector $v_k$. This construction satisfies the error identity
\begin{equation}
    \begin{split}
        &\frac{1}{2M+1} \sum_{j=1}^{2M+1} \left\| \widetilde{y}_j - \sum_{k=1}^{l} (\widetilde{y}_j, \psi_k)_{X} \psi_k \right\|_{X}^2 
        % &+ \frac{1}{2M+1} \sum_{j=1}^{M} \left\|  \widetilde{y}_j - \sum_{k=1}^{l} (\bar{\partial} \widetilde{y}(t_j), \psi_k)_{X} \psi_k \right\|_{X}^2 
        = \sum_{k=l+1}^{N} \lambda_k.
    \end{split}
\end{equation}
% \begin{proposition}{\cite[proposition 1]{kunisch2001galerkin}}
% Let $\lambda_1 \geq \cdots \geq \lambda_{d} > 0$ denote the positive eigenvalues of $K$ and $v_1, \ldots, v_{d} \in \mathbb{R}^{2M+1}$ the associated eigenvectors. Then a POD basis of rank $l \leq d$ is given by
% \begin{equation}
% \psi_k = \frac{1}{\sqrt{\lambda_k}} \sum_{j=1}^{2M+1} (v_k)_j \widetilde{y}_j,
% \end{equation}
% where $(v_k)_j$ is the $j$-th component of the eigenvector $v_k$. Moreover, we have the error formula
% \begin{equation}
%     \begin{split}
%         &\frac{1}{2M+1} \sum_{j=1}^{2M+1} \left\| \widetilde{y}_j - \sum_{k=1}^{l} (\widetilde{y}_j, \psi_k)_{X} \psi_k \right\|_{X}^2 
%         % &+ \frac{1}{2M+1} \sum_{j=1}^{M} \left\|  \widetilde{y}_j - \sum_{k=1}^{l} (\bar{\partial} \widetilde{y}(t_j), \psi_k)_{X} \psi_k \right\|_{X}^2 
%         = \sum_{k=l+1}^{d} \lambda_k.
%     \end{split}
% \end{equation}
% \end{proposition}
Finally, by retaining the first $N_{pod}$ dominant modes (i.e., setting $l=N_{pod}$), we define the reduced-order POD subspace as $V_{pod}=\operatorname{span}\{\psi_1,\ldots, \psi_{N_{pod}}\}$.

\subsection{Convergence of the POD Method}\label{sec_conv_pod}
To analyze the approximation properties of the POD basis derived from $\widetilde{u}$, we assume that $f$ admits the following finite-dimensional representation:
\begin{equation}\label{f_app}
    f_{app}=\sum_{k=1}^L f_k\phi_k.
\end{equation}
Under this assumption, \cref{eq_forward_solution} and \cref{eq_evolution_solution} are rewritten as 
\begin{equation}
    % \begin{aligned}
        u=\sum_{k=1}^L\frac{1}{\mu_k}f_k\phi_k\quad \text{and}\quad
        \hat{u}(\cdot,t)=\sum_{k=1}^L\frac{1}{\mu_k}(1-e^{-\mu_kt})f_k\phi_k.
    % \end{aligned}
\end{equation}
In the noise-free setting where $m=u$, the computed state $\widetilde{u}$ satisfies
\begin{equation}
    \widetilde{u}(\cdot,t)=\sum_{k=1}^L\frac{1}{\mu_k^2}(1-e^{-\mu_k t})f_k\phi_k.
\end{equation}

Subsequently, let $\mathbf{x}= (x_1,\ldots,x_N)^{\top}$ denote the vector of finite element nodes in $\Omega$. We construct the discrete snapshot matrices for the states and their temporal derivatives as follows:
% \begin{equation}
%     \begin{aligned}
%         \textbf{A} = [y_2,\ldots,y_{M+1}], \textbf{A}_1=[y_{M+2},\cdots,y_{2M+1}]
%     \end{aligned}
% \end{equation}
\begin{align}
    \textbf{A}_1 &= [\hat{y}_2,\ldots,\hat{y}_{M+1}],\ \widetilde{\textbf{A}}_1 = [\widetilde{y}_2,\ldots,\widetilde{y}_{M+1}],\\[2mm]
    \textbf{A}_2&=[\hat{y}_{M+2},\cdots,\hat{y}_{2M+1}],\
    \widetilde{\textbf{A}}_2=[\widetilde{y}_{M+2},\cdots,\widetilde{y}_{2M+1}],
\end{align}
where $\hat{y}_j$ and $\widetilde{y}_{j}$ are evaluations of the respective continuous functions at $\mathbf{x}$.
% the column vectors are obtained by evaluating the continuous functions at the spatial nodes $\mathbf{x}$:
% \begin{align}
%     y_j &= u(\mathbf{x},t_{j-1}),\ \widetilde{y}_j = \widetilde{u}(\mathbf{x},t_{j-1}),\ \text{for}\ j=2,\ldots,M+1,\\[2mm]
%     y_{j} &= \overline{\partial} u(\mathbf{x}, t_{j-M-1}),\ \widetilde{y}_{j} = \overline{\partial} \widetilde{u}(\mathbf{x}, t_{j-M-1}),\ \text{for}\ j=M+2,\ldots,2M+1.
% \end{align}
To facilitate analysis, we introduce the spatial basis matrix  
$\Phi = [\phi_1 (x),\ldots, \phi_L (x)]$, the coefficient matrices $\mathrm{F} = \mathrm{diag}(f_1,\ldots, f_L)$ and $\mathrm{D} = \mathrm{diag}(\frac{1}{\mu_1},\ldots,\frac{1}{\mu_L})$, and the temporal matrix $\mathrm{J} \in \bbR^{L\times M}$ with $\mathrm{J}_{i,j} = \frac{1}{\mu_i}(1-e^{-\mu_i t_j})$.
% \begin{equation}
%     \mathrm{J}_{i,j} = \frac{1}{\mu_i}(1-e^{-\mu_i t_j})\quad \text{for } i = 1, \dots, L \text{ and } j = 1, \dots, M.
% \end{equation}
Letting $\mathrm{U} \in \bbR^{M\times M}$ denote the finite difference matrix with $\mathrm{U}_{i,i} = \frac{1}{\Delta t}$ and $\mathrm{U}_{i,i+1} = -\frac{1}{\Delta t}$, the snapshot matrices can be factorized as:
\begin{equation}
    \textbf{A}_1 = \Phi \mathrm{F} \mathrm{J},\quad \widetilde{\textbf{A}}_1 = \Phi \mathrm{D} \mathrm{F} \mathrm{J},\quad \textbf{A}_2 = \Phi \mathrm{F} \mathrm{J} \mathrm{U},\quad \widetilde{\textbf{A}}_2 = \Phi \mathrm{D} \mathrm{F} \mathrm{J} \mathrm{U}
\end{equation}

% Next, let $\mathbf{x}= (x_1,\ldots,x_N)^{\top}$ denote the vector of finite element nodes in $\Omega$. 
% We define the basis matrix
% \begin{equation}
%     \Phi = [\phi_1 (\mathbf{x}),\ldots, \phi_L (\mathbf{x})],
% \end{equation}
% where $\phi_i (\mathbf{x}) = \big(\phi_i (x_1),\ldots,\phi_i (x_N)\big)^{\top}$. 

% We denote $A=(y_2,\cdots,y_{M+1})$, $\widetilde{A}=(\widetilde{y}_2,\cdots,\widetilde{y}_{M+1})$, $A_1=(y_{M+2},\cdots,y_{2M+1})$, and $\widetilde{A}_1=(\widetilde{y}_{M+2},\cdots,\widetilde{y}_{2M+1})$. Using the formulations of $v$ and $\widetilde{v}$, we can represent
% the matrices $A$ and $\widetilde{A}$ as follows:
% \begin{equation}
%     A=\Phi F J,\qquad \widetilde{A}=\Phi D F J,\qquad A_1=\Phi FJ\widetilde{L},\qquad \widetilde{A}_1=\Phi DFJ\widetilde{L},
% \end{equation}
% where $F=\operatorname{diag}(f_1,\cdots,f_L)$, $D=\operatorname{diag}(\frac{1}{\mu_1},\cdots,\frac{1}{\mu_L})$,  $J(i,j)=\frac{1}{\mu_i}(1-e^{-\mu_i t_j})$  , and $\widetilde{L}(i,i)=\frac{1}{\Delta t}$, $\widetilde{L}(i-1,i)=-\frac{1}{\Delta t}$ for all $i=1,\cdots,M$.

Following \cite[Lemma 2.2]{zhang2025novel},
$\mathrm{J}$ has full row rank when $L\leq M$. Consequently, there exist invertible matrices $\mathrm{P}$ and $\widetilde{\mathrm{P}}$ such that $\Phi \mathrm{D} \mathrm{F} \mathrm{J} \mathrm{P}=\Phi \mathrm{F} \mathrm{J}$, and $\Phi \mathrm{D} \mathrm{F} \mathrm{J} \mathrm{U}=\Phi \mathrm{F} \mathrm{J} \mathrm{U}\widetilde{\mathrm{P}}$. This equivalence implies that the pseudo-time snapshot matrices $\textbf{A}_1,\textbf{A}_2$ and the computed snapshot matrices $\widetilde{\textbf{A}}_1, \widetilde{\textbf{A}}_2$ share identical column  spaces. Based on this, \Cref{error_parabolic_adjoint_pod_1} establishes the projection error bound for the pseudo-time states $\hat{y}_i$ onto the adjoint-POD space $V_{pod}$.

\begin{theorem}\label{error_parabolic_adjoint_pod_1}
    % Suppose that a sufficient number of snapshots are available such that 
    Assume $L\leq M$. Then, the projection error of the pseudo-time states $\hat{y}_i$ onto the adjoint-POD space $V_{pod}$ satisfies 
     % We can derive an approximation error bound if a sufficient number of snapshots are available, i.e., $L\leq M$. In this case, the following error bound holds:
    \begin{equation}
        \frac{1}{2M+1}\sum_{i=1}^{2M+1}\| \hat{y}_i - \propod \hat{y}_i\|_{X}^2 \leq C \mu_L^{2} \rho,
    \end{equation}
    where $\propod$ denotes the orthogonal projection operator onto the adjoint-POD space $V_{pod}$, and $\rho = \sum_{k=N_{pod}+1}^{2M+1}\lambda_k$
    represents the truncation error, which is determined by the decay rate of the eigenvalues $\lambda_k$ of the correlation matrix $K$.
    % where $\propod$ is the orthogonal projection operator onto the adjoint-POD space $\mathrm{span}\{\psi_1,\ldots,\psi_{N_{pod}}\}$ and $\rho = \sum_{k=N_{pod}+1}^{2M+1}\lambda_k$ is a parameter that depends on the decay speed of the eigenvalues of the correlation matrix.
\end{theorem}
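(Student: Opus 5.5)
The idea is to use the column-space identities stated just before the theorem to write each pseudo-time snapshot as a linear combination of the computed snapshots. Then the POD error identity of \cite[proposition 1]{kunisch2001galerkin} controls the projection error, and the combination coefficients cost a factor $\mu_L^2$.

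The key observation is $\hat{y}_j=\mathcal{A}\widetilde{y}_j$ on the span of $\phi_1,\ldots,\phi_L$. Indeed, $\textbf{A}_1=\Phi\mathrm{F}\mathrm{J}$ and $\widetilde{\textbf{A}}_1=\Phi\mathrm{D}\mathrm{F}\mathrm{J}$, so $\textbf{A}_1=\Phi\mathrm{D}^{-1}\Phi^{+}\widetilde{\textbf{A}}_1$. The same relation holds for $\textbf{A}_2,\widetilde{\textbf{A}}_2$, and also for $\hat y_1=\widetilde y_1=0$. So every pseudo-time snapshot is $\hat y_i=\mathcal{A}_L\widetilde y_i$, where $\mathcal{A}_L$ is $\mathcal{A}$ restricted to $\mathrm{span}\{\phi_1,\ldots,\phi_L\}$ and $\|\mathcal{A}_L\|=\mu_L$.

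A bare operator bound is not enough, because $\mathcal{A}_L$ does not commute with $\propod$. I will therefore use the invertible matrices $\mathrm{P},\widetilde{\mathrm{P}}$ only to confirm that the pseudo-time and computed snapshots share the column space $Y$, which has dimension at most $L$. The actual estimate comes from the following steps.

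1. Write $\widetilde y_i=\propod\widetilde y_i+r_i$ with $r_i\perp V_{pod}$. Then
\[
\hat y_i-\propod\hat y_i=(I-\propod)\mathcal{A}_L\propod\widetilde y_i+(I-\propod)\mathcal{A}_L r_i .
\]
The second term has norm at most $\mu_L\|r_i\|_X$.

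2. The first term is where the difficulty lies. I would bound it through the structure of the POD basis: each $\psi_k$ lies in $\Phi\,\mathrm{span}(\mathrm{D}\mathrm{F}\mathrm{J})$. Since $\mathrm{J}$ has full row rank (\cite[Lemma 2.2]{zhang2025novel}), I would try to show that, for a generic choice of rank $N_{pod}<L$, the part of $\mathcal{A}_L\psi_k$ lying outside $V_{pod}$ can again be absorbed into the truncated directions. If that is unclear, I fall back on a cruder route, treating $\mathcal{A}_L$ as a bounded map on the finite-dimensional space and expanding in $\{\psi_k\}_{k\le N}$, with all constants collected into $C$.

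3. Once $\|\hat y_i-\propod\hat y_i\|_X\le C\mu_L\|\widetilde y_i-\propod\widetilde y_i\|_X$ holds, square it, average over $i$, and apply the error identity
\[
\frac{1}{2M+1}\sum_i\|\widetilde y_i-\propod\widetilde y_i\|_X^2=\sum_{k>N_{pod}}\lambda_k=\rho .
\]
This yields the claimed bound $C\mu_L^2\rho$.

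The main obstacle is step 2: controlling $(I-\propod)\mathcal{A}_L\propod$. This is exactly where the constant $C$, depending on $\mathrm{P}$, $\widetilde{\mathrm{P}}$, $\mathrm{F}$ and $\mathrm{J}$, enters. I expect the argument to use the factorization $\textbf{A}=\widetilde{\textbf{A}}\mathrm{P}$ directly: write $\hat y_i=\sum_j \mathrm{P}_{ji}\widetilde y_j$, so that
\[
\|\hat y_i-\propod\hat y_i\|\le\sum_j|\mathrm{P}_{ji}|\,\|\widetilde y_j-\propod\widetilde y_j\|.
\]
Applying Cauchy–Schwarz then gives the bound with $C\mu_L^2$ replaced by $\|\mathrm{P}\|^2$, or $\|\widetilde{\mathrm{P}}\|^2$ for the difference-quotient block. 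The remaining work is to show that this norm scales like $\mu_L^2$, using $\mathrm{P}=\mathrm{J}^{+}\mathrm{D}^{-1}\mathrm{J}$ with $\|\mathrm{D}^{-1}\|=\mu_L$.
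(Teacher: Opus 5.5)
Your final paragraph is exactly the paper's proof, and it suffices on its own. Steps 1--3, however, should be discarded. The per-snapshot inequality $\|\hat y_i-\propod\hat y_i\|_X\le C\mu_L\|\widetilde y_i-\propod\widetilde y_i\|_X$ that step 3 presupposes is false in general. Since $V_{pod}$ is not invariant under $\mathcal{A}_L$, a computed snapshot can lie in $V_{pod}$, or arbitrarily close to it (e.g.\ parallel to $\psi_1$), while $\hat y_i=\mathcal{A}_L\widetilde y_i$ does not. So no constant can control $(I-\propod)\mathcal{A}_L\propod\widetilde y_i$ by $\|r_i\|_X$ in step 2, and neither the ``generic'' argument nor the fallback can rescue it. The factorization avoids the problem because it bounds the residual of $\hat y_i$ by the residuals of \emph{all} computed snapshots, $(I-\propod)\hat y_i=\sum_j\mathrm{P}_{ji}e_j$ with $e_j:=\widetilde y_j-\propod\widetilde y_j$. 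Only the averaged quantity is then compared with $\rho$ via the POD error identity.

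To finish along that route, as the paper does, note first that your $\mathrm{P}=\mathrm{J}^{+}\mathrm{D}^{-1}\mathrm{J}$ gives $\widetilde{\textbf{A}}_1\mathrm{P}=\Phi\mathrm{D}\mathrm{F}\mathrm{J}\mathrm{J}^{+}\mathrm{D}^{-1}\mathrm{J}=\Phi\mathrm{F}\mathrm{J}=\textbf{A}_1$. This uses only $\mathrm{J}\mathrm{J}^{+}=I_L$ and the commutativity of diagonal matrices, so $L<M$ and vanishing $f_k$ need no separate treatment. For the difference-quotient block, $\widetilde{\mathrm{P}}=\mathrm{U}^{-1}\mathrm{P}\mathrm{U}$ does the same job, and $\hat y_1=0$ contributes nothing. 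Cauchy--Schwarz then gives $\sum_i\|\sum_j\mathrm{P}_{ji}e_j\|_X^2\le\|\mathrm{P}\|_F^2\sum_j\|e_j\|_X^2$; in fact $\|\mathrm{P}\|_2^2$ suffices in place of $\|\mathrm{P}\|_F^2$. Finally, $\|\mathrm{P}\|_2\le\|\mathrm{J}^{+}\|_2\|\mathrm{J}\|_2\,\mu_L$. The paper reaches the same bound by observing that $\|\mathrm{P}\|_d:=\|\mathrm{J}\mathrm{P}\mathrm{J}^{-1}\|_2=\|\mathrm{D}^{-1}\|_2=\mu_L$ is a matrix norm and then invoking norm equivalence. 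Either way, $C$ carries the conditioning of $\mathrm{J}$ (and of $\mathrm{U}$ for $\widetilde{\mathrm{P}}$), which your remark about $C$ depending on $\mathrm{P}$, $\widetilde{\mathrm{P}}$ and $\mathrm{J}$ already anticipates.
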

\begin{proof}
    For simplicity, we assume $L = M$ in the following proof; the case $L < M$ follows analogously. From the preceding discussion, we know that the matrices $\Phi$ and $\mathrm{J}$ are invertible, and there exists an invertible matrix $\mathrm{P}$ such that
    % In the following proof, we consider $L=M$ for simplicity. For the case $L<M$, the proof is similar. From the above discussion, we know that matrices $\Phi$ and $J$ are  invertible, and there exists an invertible matrix $P$ such that:
    \begin{equation}\label{snapshot}
        \Phi \mathrm{D} \mathrm{F} \mathrm{J} \mathrm{P}=\Phi \mathrm{F} \mathrm{J}.
    \end{equation}
    It follows that $\mathrm{P}=\mathrm{J}^{-1}\mathrm{D}^{-1}\mathrm{J}$. Furthermore, we have $\hat{y}_j=\sum_{i=1}^L \mathrm{P}_{ij}\widetilde{y}_i$. Using the Cauchy-Schwarz inequality, we can show that for any $1\leq j\leq L$,
    \begin{equation}\label{eq_snapshot}
    \begin{aligned}
        \frac{1}{2M+1}\sum_{j=1}^{M}\|\hat{y}_{j+1}-\propod \hat{y}_{j+1}\|_X^2
        &\leq \frac{1}{2M+1}\sum_{j=1}^{M}\sum_{i=1}^L \mathrm{P}_{ij}^2\|\widetilde{y}_{i+1}-\propod \widetilde{y}_{i+1}\|_X^2\\[2mm]
        &\leq \|\mathrm{P}|\vert_F^2\frac{1}{2M+1}\sum_{i=2}^{M+1}\|\widetilde{y}_i-\propod\widetilde{y}_i\|_X^2\\[2mm]
        &\leq\|\mathrm{P}\|^2_F \rho.
    \end{aligned}
    \end{equation}
It remains to estimate the Frobenius norm of $\mathrm{P}$. Since $\mathrm{P}=\mathrm{J}^{-1}\mathrm{D}^{-1}\mathrm{J}$, we can define a matrix norm $\|\mathrm{P}|\vert_d=\|\mathrm{D}^{-1}|\vert_2$. It is straightforward to verify that $\|\cdot|\vert_d$ is a matrix norm. Thus, we have
\begin{equation}\label{eq_p_matrix_norm}
    \|\mathrm{P}\|_F\leq C \|\mathrm{P}\|_d=C\|\mathrm{D}^{-1}\|_2\leq C\mu_L.
\end{equation}

By a similar argument, analyzing the matrices $\textbf{A}_2$ and $\widetilde{\textbf{A}}_2$, we obtain $\widetilde{\mathrm{P}}=\mathrm{U}^{-1}\mathrm{J}^{-1}\mathrm{D}^{-1}\mathrm{J}\mathrm{U}$.
Using the Cauchy-Schwarz inequality again, we can show that for any
$1\leq j\leq L$,
\begin{equation}\label{snapshot_difference}
    \begin{aligned}
        \frac{1}{2M+1}\sum_{j=M+2}^{2M+1}\|\hat{y}_j-\propod \hat{y}_j\|_X^2
        &\leq\frac{1}{2M+1}\sum_{j=1}^{M}\sum_{i=1}^L\widetilde{P}_{ij}^2\|\widetilde{y}_{M+1+i}-\propod\widetilde{y}_{M+1+i}\|_X^2\\[2mm]
        &\leq \|\widetilde{\mathrm{P}}\|_F^2\frac{1}{2M+1}\sum_{i=M+2}^{2M+1}\|\widetilde{y}_i-\propod\widetilde{y}_i\|_X^2\\[2mm]
        &\leq \|\widetilde{\mathrm{P}}|\vert_F^2\rho.
    \end{aligned}
\end{equation}
Following the same logic as in \cref{eq_p_matrix_norm}, it can be shown that $\|\widetilde{\mathrm{P}}|\vert_F\leq C \mu_L$.

Finally, by combining the estimates \cref{eq_snapshot} and \cref{snapshot_difference}, and incorporating the initial condition $\hat{y}_1=0$, we can conclude that the total projection error satisfies:
\begin{equation}
    \frac{1}{2M+1}\sum_{i=1}^{2M+1}\| \hat{y}_i - \propod \hat{y}_i\|_{X}^2 \leq C \mu_L^{2} \rho.
\end{equation}
This completes the proof.
\end{proof}

% In \cite{kunisch2001galerkin}, the authors point out that using only solution snapshots mainly captures the spatial states of a parabolic system, but may not accurately represent its temporal evolution. Since the reduced Galerkin model must also approximate the time derivative in the governing equation, the snapshot set is enriched by adding temporal difference quotients. These difference terms contain information on changes between successive time levels, so the resulting POD basis better captures the dynamics of the problem and leads to sharper error estimates and improved stability. Now we present another advantage of adding temporal difference quotients: pointwise POD error \cite{koc2021optimal}.

In \cite{kunisch2001galerkin}, the authors observe that using only solution snapshots primarily captures the spatial features of a parabolic system, often failing to accurately represent its temporal evolution. As the reduced-order Galerkin model must also approximate the time derivative, the snapshot set is typically enriched with temporal difference quotients. These terms incorporate information regarding the transitions between successive time levels, allowing the resulting POD basis to better characterize the underlying dynamics. This approach not only yields sharper error estimates and enhanced stability but also provides another significant advantage: the attainment of a pointwise POD error bound \cite{koc2021optimal}.

\begin{lemma}
    Let $T>0$ and $\Delta t=\frac{T}{M}$. Then, the pointwise POD error satisfies
    % Then, we have pointwise POD error: 
    \begin{equation}
        \max_{1\leq j\leq M+1}\|\hat{y}_j-\propod \hat{y}_j|\vert_X^2\leq C_1 C\mu_L^{2}\rho,
    \end{equation}
    where  $C_1=6\max\{1,T^2\}$.
\end{lemma}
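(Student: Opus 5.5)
The plan is the discrete telescoping argument of \cite{koc2021optimal}, with the averaged bound of \Cref{error_parabolic_adjoint_pod_1} as input. Write $e_j := \hat{y}_j - \propod \hat{y}_j$ for $j=1,\dots,M+1$, i.e.\ $e_j=(I-\propod)\hat{u}(\cdot,t_{j-1})$. Since $I-\propod$ is linear and commutes with the difference quotient, the projection errors of the difference-quotient snapshots are
\[
\hat{y}_{M+1+k}-\propod \hat{y}_{M+1+k} = \frac{e_{k+1}-e_k}{\Delta t}, \qquad k=1,\dots,M .
\]
Hence \Cref{error_parabolic_adjoint_pod_1} gives two facts at once, with $S := C\mu_L^2\rho$:
\[
\frac{1}{2M+1}\sum_{j=1}^{M+1}\|e_j\|_X^2 \le S
\quad\text{and}\quad
\frac{1}{2M+1}\sum_{k=1}^{M}\Big\|\frac{e_{k+1}-e_k}{\Delta t}\Big\|_X^2 \le S .
\]
Both are used in the estimate below.

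Fix $j$ and pick an index $i$. The telescoping identity $e_j = e_i + \sum_{k} (e_{k+1}-e_k)$ over the indices between $i$ and $j$, together with $(a+b)^2\le 2a^2+2b^2$ and Cauchy--Schwarz on the sum, gives
\[
\|e_j\|_X^2 \le 2\|e_i\|_X^2 + 2M\Delta t^2\sum_{k=1}^{M}\Big\|\frac{e_{k+1}-e_k}{\Delta t}\Big\|_X^2 .
\]
Using $M\Delta t = T$, the second term is at most $2T\Delta t\,(2M+1)S = 2T^2\frac{2M+1}{M}S \le 6T^2 S$.

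For the first term, average over $i=1,\dots,M+1$ instead of picking a single $i$. Then $2\cdot\frac{1}{M+1}\sum_i\|e_i\|_X^2 \le 2\frac{2M+1}{M+1}S \le 4S$. Alternatively, take $i=1$: since $\hat{y}_1=\hat{u}(\cdot,0)=0$, we have $e_1=0$ and the first term vanishes entirely. Either route gives $\|e_j\|_X^2 \le 4S + 6T^2 S \le 6\max\{1,T^2\}\,S\cdot\tfrac{5}{3}$. The $i=1$ route is cleaner and gives $\le 6T^2 S\le C_1 S$ directly, so I would use it. To keep the bound valid when $T<1$, I would simply observe $6T^2\le 6\max\{1,T^2\}=C_1$.

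The main obstacle is bookkeeping rather than mathematics. One must make sure the normalization $\frac{1}{2M+1}$ is converted correctly into $\Delta t$ weights: the factor $(2M+1)\Delta t^2 M$ must be bounded by a multiple of $T^2$, which uses $\frac{2M+1}{M}\le 3$. One must also check that the index shift between $\hat{y}_{M+1+k}$ and $\overline{\partial}\hat{u}(\cdot,t_k)$ matches the pairs $(e_k,e_{k+1})$ exactly. Once this is verified, the maximum over $j$ follows immediately because the bound is uniform in $j$.
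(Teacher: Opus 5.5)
Your proposal is correct and uses the same telescoping argument as the paper: write the error as a sum of difference quotients, apply Cauchy--Schwarz, then invoke \Cref{error_parabolic_adjoint_pod_1} with $(2M+1)\Delta t\le 3T$. The only difference is the anchor index. The paper telescopes from the index $l$ of smallest error and bounds $\|z_l\|_X$ by the time-average of the state errors, whereas you anchor at $j=1$, where $e_1=0$ by the zero initial condition; that is in fact where the paper's minimum is attained, so you need only the difference-quotient half of \Cref{error_parabolic_adjoint_pod_1} and land inside $C_1=6\max\{1,T^2\}$ directly.
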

\begin{proof}
    Define $z_j=\hat{y}_j-\propod \hat{y}_j$ for $\forall j=1,\dots,M+1$, and the discrete time derivative $\bar\partial z_j=\frac{z_{j+1}-z_{j}}{\Delta t}$ for $\forall j=1,\dots,M$. By expressing $z_k-z_l = \Delta t\sum_{n=l+1}^{k}\bar{\partial}z_n$, and applying the triangle inequality, we have
    \begin{equation}\label{eq_z_k}
        \begin{aligned}
            \|z_k|\vert_X&\leq \|z_l|\vert_X+\sum_{n=1}^M\Delta t^{\frac{1}{2}}(\Delta t^{\frac{1}{2}}\|\bar{\partial}z_n|\vert_X)\\[2mm]
            &\leq \|z_l|\vert_X+T^{\frac{1}{2}}(\sum_{n=1}^{M}\Delta t\|\bar{\partial}z_n|\vert_X^2)^{\frac{1}{2}},
        \end{aligned} 
    \end{equation}
    where the second inequality follows from the Cauchy-Schwarz inequality and $\sum_{n=1}^M \Delta t = T$. Let $l$ be the index such that $\|z_l|\vert_X=\min_{1\leq n\leq M+1}\|z_n|\vert_X$. Then,
    \begin{equation}\label{eq_z_l}
            \|z_l|\vert_X
            % = \frac{1}{M+1}\sum_{n=1}^{M+1}\|z_l|\vert_X 
            = \frac{1}{M+1}\sum_{n=2}^{M+1}\|z_l|\vert_X
            \leq \frac{1}{T}\sum_{n=2}^{M+1}\Delta t\|z_n|\vert_X\leq T^{-\frac{1}{2}}(\sum_{n=2}^{M+1}\Delta t\|z_n|\vert_X^2)^{\frac{1}{2}},
    \end{equation}
    where we used $\frac{1}{M+1} < \frac{1}{M} = T^{-1}\Delta t$, alongside the Cauchy-Schwarz inequality. Substituting \cref{eq_z_l} into \cref{eq_z_k} and utilizing $(a+b)^2 \leq 2(a^2 + b^2)$, we obtain
    \begin{equation}\label{eq_z_k_2}
        \|z_k|\vert_X^2\leq \frac{2}{T}\sum_{n=2}^{M+1}\Delta t\|z_n|\vert_X^2+ 2T\sum_{n=1}^M\Delta t \|\bar{\partial}z_n|\vert^2_X.
    \end{equation}
    Applying \Cref{error_parabolic_adjoint_pod_1} and the identity $\Delta t \leq \frac{3T}{2M + 1}$, we obtain
    
    % In \Cref{error_parabolic_adjoint_pod_1}, we have already obtained the upper bounds for the first and second terms on the right-hand side of the above inequality.
    % Since squaring both sides, and using the inequalities $(a+b)^2\leq 2(a^2+b^2)$, $\Delta t=(2T+\Delta t)/(2M+1)\leq 3T/(2M+1)$, we obtain the result:
    \begin{equation}
         \max_{1\leq j\leq M+1}\|z_j|\vert^2_X=\max_{1\leq j\leq M+1}\|\hat{y}_j-\propod \hat{y}_j|\vert_X^2\leq C_1 C\mu_L^{2}\rho,
    \end{equation}
    where $C_1=6\max\{1,T^2\}$. This completes the proof.
\end{proof}

Based on the preceding estimate, we now characterize the error of the proposed method. Let $a(\cdot,\cdot): \mathcal{D}(\mathcal{A}^{\frac{1}{2}}) \times \mathcal{D}(\mathcal{A}^{\frac{1}{2}}) \rightarrow \bbR$ be the bilinear form defined by 
\begin{equation}
    a(u,v)=(\mathcal{A}u,v)_X.
\end{equation}
Given that $\mathcal{A}$ is an unbounded self-adjoint operator, this bilinear form is well-defined. We further introduce the inner product $(u,v)_{\mathcal{A}}:=(\mathcal{A}u,v)_X$ and its induced norm $\|u|\vert_{\mathcal{A}} := \sqrt{(\mathcal{A}u,u)_X}$. For any $u\in \mathcal{D}(\mathcal{A}^{\frac{1}{2}}) $,
the following norm equivalence holds:
\begin{equation}\label{norm_equi}
\sqrt{\mu}_1 \lVert u \rVert_X \le \lVert u \rVert_{\mathcal{A}} \le \sqrt{\|\mathrm{V}\|_2} \lVert u \rVert_X,
\end{equation}
where $\mathrm{V} \in \bbR^{N\times N}$ with $\mathrm{V}_{ij}=(\psi_i,\psi_j)_{\mathcal{A}}$. 

Within this framework, we seek the approximate solution $u_{pod}\in V_{pod}$ to the problem \cref{eq_inverse} in $V_{pod}$ via the Galerkin projection
\begin{equation}
a(u_{pod}, \psi) = (f, \psi)_X, \quad \forall \psi \in V_{pod}.
\end{equation}
Similarly, for the problem \cref{eq_evolution}, the semi-discrete approximation $\hat{u}\in V_h$ is defined by
\begin{equation}
(\hat{u}_t, \psi)_X + a(\hat{u}, \psi) = (f, \psi)_X, \quad \forall \psi \in V_{h},
\end{equation}
where $V_h\subset X$ denotes the discrete subspace.

\begin{theorem}\label{error_Ritz}
    Let $\mathcal{P}^{N_{pod}}$ denote the Ritz projection onto $V_{pod}$ with respect to the bilinear form $a(\cdot,\cdot)$, defined by $a(\mathcal{P}^{N_{pod}}u,v)=a(u,v)$  for all $\forall v\in V_{pod}$. Then, the Ritz projection error satisfies
    \begin{equation}
        \|\hat{u}(\cdot,t_j)-\mathcal{P}^{N_{pod}}\hat{u}(\cdot,t_j)|\vert_X\leq C\frac{\mu_L}{\sqrt{\mu}_1}\sqrt{\|\mathrm{V}|\vert_2\rho},\qquad \forall j=0,\ldots,M.
    \end{equation}
\end{theorem}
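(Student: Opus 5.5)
The plan is to use the standard best-approximation (Céa-type) property of the Ritz projection in the energy norm $\|\cdot\|_{\mathcal{A}}$, and then pass between the $\mathcal{A}$-norm and the $X$-norm using the norm equivalence \cref{norm_equi}. By definition of $\mathcal{P}^{N_{pod}}$, the error $\hat{u}(\cdot,t_j)-\mathcal{P}^{N_{pod}}\hat{u}(\cdot,t_j)$ is $a$-orthogonal to $V_{pod}$. Hence, for any $w\in V_{pod}$,
\begin{equation*}
\|\hat{u}(\cdot,t_j)-\mathcal{P}^{N_{pod}}\hat{u}(\cdot,t_j)\|_{\mathcal{A}}\leq \|\hat{u}(\cdot,t_j)-w\|_{\mathcal{A}}.
\end{equation*}
I would choose $w=\propod \hat{u}(\cdot,t_j)$, the $X$-orthogonal projection. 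This reduces the Ritz error to the $X$-projection error already controlled in the preceding lemma.

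The steps are as follows. First, apply the lower bound in \cref{norm_equi} to the left-hand side:
\begin{equation*}
\|\hat{u}(\cdot,t_j)-\mathcal{P}^{N_{pod}}\hat{u}(\cdot,t_j)\|_X\leq \mu_1^{-1/2}\|\hat{u}(\cdot,t_j)-\mathcal{P}^{N_{pod}}\hat{u}(\cdot,t_j)\|_{\mathcal{A}}.
\end{equation*}
Second, use the best-approximation property with $w=\propod\hat{u}(\cdot,t_j)$. Third, apply the upper bound in \cref{norm_equi}, $\|\cdot\|_{\mathcal{A}}\le\sqrt{\|\mathrm{V}\|_2}\,\|\cdot\|_X$, to the difference $\hat{u}(\cdot,t_j)-\propod\hat{u}(\cdot,t_j)=\hat{y}_{j+1}-\propod\hat{y}_{j+1}$. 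Finally, invoke the pointwise POD lemma, $\max_{j}\|\hat{y}_j-\propod\hat{y}_j\|_X^2\leq C_1C\mu_L^2\rho$, take square roots, and absorb $\sqrt{C_1 C}$ into a new constant $C$. This gives
\begin{equation*}
\|\hat{u}(\cdot,t_j)-\mathcal{P}^{N_{pod}}\hat{u}(\cdot,t_j)\|_X\leq C\frac{\mu_L}{\sqrt{\mu_1}}\sqrt{\|\mathrm{V}\|_2\,\rho}
\end{equation*}
uniformly in $j=0,\ldots,M$, using the index shift $\hat{y}_{j+1}=\hat{u}(\cdot,t_j)$.

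The delicate step is the third one. The upper bound in \cref{norm_equi} involves the Gram matrix $\mathrm{V}$ of the POD modes in the $\mathcal{A}$-inner product, so as stated it is only valid on a finite-dimensional space spanned by the relevant functions. Since $\mathcal{A}$ is unbounded, it cannot hold on all of $\mathcal{D}(\mathcal{A}^{1/2})$. I would justify applying it to $\hat{y}_{j+1}-\propod\hat{y}_{j+1}$ as follows. Under the finite-dimensional assumption \cref{f_app}, $\hat{u}(\cdot,t_j)$ lies in $\operatorname{span}\{\phi_1,\ldots,\phi_L\}$. By the column-space equivalence of $\mathbf{A}_1$ and $\widetilde{\mathbf{A}}_1$, this span coincides with the snapshot space. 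So both $\hat{u}(\cdot,t_j)$ and its projection lie in the finite-dimensional space on which the equivalence constant is $\|\mathrm{V}\|_2$; if needed, I would enlarge $\mathrm{V}$ to the Gram matrix of an orthonormal basis of that space. If that identification is too loose, the fallback is the cruder bound $\|v\|_{\mathcal{A}}\le\sqrt{\mu_L}\|v\|_X$ on this span, which one would then express through $\|\mathrm{V}\|_2$.
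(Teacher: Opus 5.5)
Your proposal is correct and follows the paper's proof step for step. Like you, the paper obtains $\|\hat u(\cdot,t_j)-\mathcal{P}^{N_{pod}}\hat u(\cdot,t_j)\|_{\mathcal{A}}\le\|\hat u(\cdot,t_j)-\psi\|_{\mathcal{A}}$ from $a$-orthogonality and Cauchy--Schwarz, takes $\psi=\propod\hat u(\cdot,t_j)$, applies both sides of \cref{norm_equi}, and then uses the pointwise POD lemma (implicitly). Your added justification of the upper bound in \cref{norm_equi} tightens a step the paper leaves unstated: that bound only holds on the span of the $N$ POD modes, and you correctly place $\hat u(\cdot,t_j)-\propod\hat u(\cdot,t_j)$ in that span via the column-space identity $\operatorname{Ran}\mathbf{A}_1=\operatorname{Ran}\widetilde{\mathbf{A}}_1$.
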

\begin{proof}
    By the definition of the Ritz projection, for any $\psi\in V_{pod}$, we have 
    \begin{equation}
        a(\hat{u}(\cdot,t_j) - \mathcal{P}^{N_{pod}}\hat{u}(\cdot,t_j), \psi) = 0.
    \end{equation}
    Using the orthogonality property, for any $\psi\in V_{pod}$, it holds that
    \begin{equation}
    \begin{aligned}
        \|\hat{u}(\cdot,t_j)-\mathcal{P}^{N_{pod}}\hat{u}(\cdot,t_j)|\vert^2_{\mathcal{A}}&=a(\hat{u}(\cdot,t_j)-\mathcal{P}^{N_{pod}}\hat{u}(\cdot,t_j),\hat{u}(\cdot,t_j)-\mathcal{P}^{N_{pod}}\hat{u}(\cdot,t_j))\\[2mm]
        % &=a(u(t_j)-\mathcal{P}^{N_{pod}}u(t_j),u(t_j))\\
        &=a(\hat{u}(\cdot,t_j)-\mathcal{P}^{N_{pod}}\hat{u}(\cdot,t_j),u(\cdot,t_j)-\psi).
    \end{aligned}
    \end{equation}
    By the Cauchy-Schwarz inequality and the definition of the norm $\|\cdot\|_{\mathcal{A}}$, we have
    \begin{equation}
        \lVert \hat{u}(\cdot,t_j) - \mathcal{P}^{N_{pod}} \hat{u}(\cdot,t_j) \rVert_{\mathcal{A}}^2 \le \lVert \hat{u}(\cdot,t_j) - \mathcal{P}^{N_{pod}} \hat{u}(\cdot,t_j) \rVert_{\mathcal{A}} \cdot \lVert \hat{u}(\cdot,t_j) - \psi \rVert_{\mathcal{A}}.
    \end{equation}
    Thus, $\lVert \hat{u}(\cdot,t_j) - \mathcal{P}^{N_{pod}} \hat{u}(\cdot,t_j) \rVert_{\mathcal{A}} \le \lVert \hat{u}(\cdot,t_j) - \psi \rVert_{\mathcal{A}}$. Choosing $\psi=\propod \hat{u}(\cdot,t_j)$ and applying \cref{norm_equi}, we obtain
    \begin{equation}
        \begin{aligned}
            \lVert \hat{u}(\cdot,t_j) - \mathcal{P}^{N_{pod}} \hat{u}(\cdot,t_j) \rVert_X &\le \frac{1}{\sqrt{\mu}_1} \lVert \hat{u}(\cdot,t_j) - \mathcal{P}^{N_{pod}} \hat{u}(\cdot,t_j) \rVert_{\mathcal{A}}  \\[2mm]
            &\le \frac{1}{\sqrt{\mu}_1} \lVert \hat{u}(\cdot,t_j) - \propod \hat{u}(\cdot,t_j) \rVert_A  \\[2mm]
            &\leq \sqrt{\frac{\|\mathrm{V}\|_2}{\mu_1}} \lVert \hat{u}(\cdot,t_j) - \propod \hat{u}(\cdot,t_j) \rVert_X  \\[2mm]
            &\le \frac{C\mu_L}{\sqrt{\mu}_1} \sqrt{\|\mathrm{V}\|_2 \rho}. 
        \end{aligned}
    \end{equation}
This completes the proof.

\end{proof}

Before proceeding to the main error analysis, we state a standard assumption regarding the approximation properties of the discrete subspace $V_h$.

\begin{assumption}\label[assumption]{err_galerkin_discretization}
    Let $\hat{u}$ be the exact solution to \cref{eq_evolution} and $\hat{u}_h \in V_h$ be the corresponding Galerkin solution. There exists an approximation error $ \varepsilon_h > 0$, determined by the richness of $V_h$, such that
    \begin{equation}
        \|\hat{u} - \hat{u}_h|\vert_X \leq \varepsilon_h \|f|\vert_X,
    \end{equation}
    where $\varepsilon_h \to 0$ as $V_h$ is refined (i.e., as the subspace becomes sufficiently rich).
\end{assumption}

Now, we are in a position to state the final error estimate for the POD-Galerkin approximation.
\begin{theorem}\label{err_ture_solution}
    Let $u$ be the exact solution to problem \cref{eq_forward} and $u_{pod}\in V_{pod}$ be the corresponding POD-Galerkin approximation. Then, the error satisfies
    % Using the test space $V_{pod}$ described above, the error between the approximation $u_{pod}$ and the exact solution $u^*$ of \cref{i_p_1}:
    \begin{equation}
        \|u -u_{pod}|\vert_X\leq C(\frac{e^{-\mu_1T}}{\mu_1}+\varepsilon_h +\frac{\mu_L}{
        \sqrt\mu_1}\sqrt{\|\mathrm{V}|\vert_2\rho})\|f|\vert_X,
    \end{equation}
    where $\rho=\sum_{k=N_{pod}+1}^{2M+1}\lambda_k$ represents the POD truncation error.
\end{theorem}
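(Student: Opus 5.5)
The plan is to split the error with the triangle inequality into three pieces, each matching one term on the right-hand side:
\[
\|u-u_{pod}\|_X\le \|u-\hat{u}(\cdot,T)\|_X+\|\hat{u}(\cdot,T)-\hat{u}_h(\cdot,T)\|_X+\|\hat{u}_h(\cdot,T)-u_{pod}\|_X .
\]
Here $\hat{u}_h$ is the semi-discrete Galerkin solution in $V_h$, and $T=t_M$ is the final snapshot time.

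The first two pieces follow directly from earlier results. By \Cref{err_gradient_flow}, the first term is bounded by $\frac{e^{-\mu_1T}}{\mu_1}\|f\|_X$. By \Cref{err_galerkin_discretization}, the second term is bounded by $\varepsilon_h\|f\|_X$.

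The third piece is the real work. I would first identify $u_{pod}$ with a Ritz projection. For every $\psi\in V_{pod}$, the Galerkin definition gives $a(u_{pod},\psi)=(f,\psi)_X=(\mathcal{A}u,\psi)_X=a(u,\psi)$. Hence $u_{pod}=\mathcal{P}^{N_{pod}}u$, which is exactly the Ritz projection of \Cref{error_Ritz}. I would then write
\[
\hat{u}_h(T)-u_{pod}=\big(\hat{u}_h(T)-\hat{u}(T)\big)+\big(\hat{u}(T)-\mathcal{P}^{N_{pod}}\hat{u}(T)\big)+\mathcal{P}^{N_{pod}}\big(\hat{u}(T)-u\big).
\]
The three summands are handled as follows.
\begin{itemize}
\item The first summand is again bounded by $\varepsilon_h\|f\|_X$.
\item The second summand is controlled by \Cref{error_Ritz} at $j=M$, giving $C\frac{\mu_L}{\sqrt{\mu_1}}\sqrt{\|\mathrm{V}\|_2\rho}$.
\item For the third summand, I use that the Ritz projection is $\|\cdot\|_{\mathcal{A}}$-orthogonal, so it has norm at most $1$ in the energy norm. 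Combined with \eqref{norm_equi}, this gives $\|\mathcal{P}^{N_{pod}}w\|_X\le \mu_1^{-1/2}\|w\|_{\mathcal{A}}$.
\end{itemize}

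To bound $\|\hat{u}(T)-u\|_{\mathcal{A}}$ in the third summand, I would use the spectral expansion directly:
\[
\|\hat{u}(T)-u\|_{\mathcal{A}}^2=\sum_k \mu_k\frac{e^{-2\mu_kT}}{\mu_k^2}|f_k|^2\le \frac{e^{-2\mu_1T}}{\mu_1}\|f\|_X^2 .
\]
This holds because $x\mapsto e^{-2xT}/x$ is decreasing, and it yields the contribution $\frac{e^{-\mu_1T}}{\mu_1}\|f\|_X$. That is a term of the same form as the first piece.

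The main obstacle is that \Cref{error_Ritz} is stated without a factor $\|f\|_X$, whereas the theorem scales everything by $\|f\|_X$. I would handle this through linearity. The snapshots $\widetilde{y}_i$, and hence $\rho$ and the $\lambda_k$, scale quadratically in the data, so normalising $f$ to $\|f\|_X=1$ makes $\rho$ a normalised quantity. By linearity of the whole pipeline in $f$, rescaling then puts $\|f\|_X$ in front of the third bracket; I would absorb the scaling into $C$ and note it explicitly.

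Finally, I would collect all terms, absorb the numerical factors into a generic $C$, and obtain the stated bound.
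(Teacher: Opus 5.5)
Your proof is correct and gives the same bound, but you handle the decisive last term by a different argument from the paper's.

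\textbf{The paper's route.} The paper stays at the semi-discrete level. It introduces the discrete stationary solution $u_h\in V_h$ and writes $u_{pod}=\mathcal{P}^{N_{pod}}u_h$. This tacitly needs $V_{pod}\subset V_h$, so that $\mathcal{P}^{N_{pod}}(\hat u-u_h)$ is an admissible test function. It then subtracts the two Galerkin equations in \eqref{eq_fem_solution} and obtains $\|\mathcal{P}^{N_{pod}}(\hat u(T)-u_h)\|_X\le \mu_1^{-1}\|\hat u_t(T)\|_X$. In other words, it uses $\hat u_t=\mathcal{A}(u-\hat u)$ as the residual of the stationary equation. Finally it needs the semi-discrete analogue of the derivative decay in \Cref{err_gradient_flow}.

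\textbf{Your route.} You read off $u_{pod}=\mathcal{P}^{N_{pod}}u$ directly from the Galerkin definition. You then use that the Ritz projection has norm at most one in $\|\cdot\|_{\mathcal{A}}$, and bound $\|\hat u(T)-u\|_{\mathcal{A}}\le e^{-\mu_1T}\mu_1^{-1/2}\|f\|_X$ spectrally. This dispenses with $u_h$, with the inclusion $V_{pod}\subset V_h$, and with any time-derivative estimate.

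\textbf{A simplification you missed.} In your decomposition the pieces $\hat u(T)-\hat u_h(T)$ and $\hat u_h(T)-\hat u(T)$ cancel identically, so $\hat u_h$ plays no role. Your argument therefore yields the bound without $\varepsilon_h$, provided \Cref{error_Ritz} is applied to the exact trajectory, as it is stated. In the paper, $\varepsilon_h$ enters only because the Ritz bound is applied to the semi-discrete trajectory, which is what one actually computes.

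\textbf{The factor $\|f\|_X$.} Your diagnosis is right, and the paper's proof has exactly the same defect: it writes $\|\theta_3\|_X\le C\frac{\mu_L}{\sqrt{\mu_1}}\sqrt{\|\mathrm{V}\|_2\rho}$ and then silently factors out $\|f\|_X$. However, you cannot ``absorb the scaling into $C$''. Under $f\mapsto sf$, $\mathrm{V}$ is invariant while $\rho$ scales like $s^2$. So with $\rho$ as defined and $C$ independent of $f$, the third term would be quadratic in $s$ while the left-hand side is linear. What your scaling argument actually proves is the homogeneous statement. That is, the bound holds with $\rho$ computed from the snapshots of the normalized data, or equivalently with $C\frac{\mu_L}{\sqrt{\mu_1}}\sqrt{\|\mathrm{V}\|_2\rho}$ not multiplied by $\|f\|_X$. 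State the result in that form.
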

\begin{proof}
    Throughout this proof, $C$ denotes a generic constant that may vary from line to line. Let $u_h\in V_h$ be the solution to \cref{eq_inverse} and $\hat{u}\in V_h$ be the semi-discrete solution to \cref{eq_evolution}. These satisfy
   \setcounter{equation}{45}  
    \begin{subequations}\label{eq_fem_solution}
        \begin{align}
                a(u_h,\psi)&=(f,\psi)_X,\quad \forall \psi\in V_h, \label{eq_first}  \\[2mm] 
                (\hat u_t,\psi)_X+a(\hat u,\psi)&=(f,\psi)_X,\quad \forall \psi\in V_h. \label{eq_second}      
        \end{align}
    \end{subequations}
     We decompose the total error as 
     \begin{equation}\label{total_error}
        \begin{aligned}
            \| u^*-u_{pod}|\vert_X
        % &=\| u^*-u(T)+u(T)-\hat u(T)+\hat u(T)-\mathcal{P}^{N_{pod}}\hat u(T)+\mathcal{P}^{N_{pod}}\hat u(T)-u_{pod}|\vert_X\\[2mm]
        \leq\|\theta_1|\vert_X+\|\theta_2|\vert_X+\|\theta_3|\vert_X+\|\theta_4|\vert_X,
        \end{aligned}
    \end{equation}
    where $\theta_1= u^*-u(T)$, $\theta_2=u(T)-\hat u(T)$, $\theta_3=\hat u(t_k)-\mathcal{P}^{N_{pod}}\hat u(t_k)$, and $\theta_4=\mathcal{P}^{N_{pod}}\hat u(T)-u_{pod}$.

    From \Cref{err_gradient_flow}, \Cref{err_galerkin_discretization}, and \Cref{error_Ritz}, we obtain
    \begin{equation}
         \|\theta_1|\vert_X \leq\frac{e^{-\mu_1 T}}{\mu_1}\|f|\vert_X,\quad \|\theta_2|\vert_X \leq \varepsilon_h \|f|\vert_X,\quad \|\theta_3|\vert_X \leq \frac{C\mu_L}{\sqrt{\mu}_1}\sqrt{\|V|\vert_2\rho}.
    \end{equation}
    It remains to estimate the term $\theta_4$. Recalling that $u_{pod}=\mathcal{P}^{N_{pod}} u_h$, we have
    \begin{equation}
         \theta_4(t)=\mathcal{P}^{N_{pod}}\hat u(\cdot,t)-u_{pod}=\mathcal{P}^{N_{pod}}(\hat u(\cdot,t)-u_h).
    \end{equation}
    Subtracting the equations in \cref{eq_fem_solution} and setting
    $\psi=\mathcal{P}^{N_{pod}}(u(\cdot,t)-u_h)$, we obtain
    \begin{equation}
    \begin{aligned}
        (\hat u_t(\cdot,t),\mathcal{P}^{N_{pod}}(\hat u(\cdot,t)-u_h))_X&=a(u_h-\hat u(\cdot,t),\mathcal{P}^{N_{pod}}(u_h-\hat u(\cdot,t))\\
        &=a(\mathcal{P}^{N_{pod}}(u_h-\hat u(\cdot,t)),\mathcal{P}^{N_{pod}}(u_h-\hat u(\cdot,t))).
    \end{aligned}
    \end{equation}
    Applying the Cauchy-Schwarz inequality and the norm equivalence \cref{norm_equi}, we have 
    \begin{equation}
    \begin{aligned}
        \|\mathcal{P}^{N_{pod}}(u_h-\hat u(\cdot,t))|\vert_X^2&\leq\frac{1}{\mu_1}\|\mathcal{P}^{N_{pod}}(u_h-\hat u(\cdot,t))|\vert_{\mathcal{A}}^2\\
        &\leq \frac{1}{\mu_1}\|\hat u_t(\cdot,t)|\vert_X\|\mathcal{P}^{N_{pod}}(u_h-\hat u(\cdot,t)|\vert_X.
    \end{aligned}
    \end{equation}
    This implies $\|\theta_4\|_X \leq \frac{1}{\mu_1} \|\hat u_t(\cdot,t)|\vert_X$.
    % \begin{equation}
    %     \|\mathcal{P}^{N_{pod}}((u_h-\hat u(t))|\vert_X\leq \frac{1}{\mu_1^2} \|\hat u_t(t)|\vert_X.
    % \end{equation}
    Analogous to the estimate of $\|u_t|\vert_X$ in \Cref{err_gradient_flow}, we have $\|\hat u_t(\cdot,T)|\vert_X\leq e^{-\mu_1 T}\|f|\vert_X$. Thus,
    \begin{equation}\label{theta_4}
        \|\theta_4|\vert_X\leq  \frac{e^{-\mu_1 T}}{\mu_1} \|f|\vert_X.
    \end{equation}
    Combining these estimates into \cref{total_error}, we obtain
    \begin{equation}
        \|u^*-u_{pod}|\vert_X\leq C(\frac{e^{-\mu_1T}}{\mu_1}+\varepsilon_h+\frac{\mu_L}{\sqrt{\mu}_1}\sqrt{\|V|\vert_2\rho})\|f|\vert_X,
    \end{equation}
    where $\rho=\sum_{k=N_{pod}+1}^{2M+1}\lambda_k$. This completes the proof.

\end{proof}

\section{Application of the Pseudo-time Data-Driven POD Framework}\label{sec_pod_application}
% In the preceding sections, we presented the framework of the gradient flow method. Next, we will specifically discuss three classes of problems that are extensively applied.
Having established the pseudo-time data-driven POD framework in \cref{sec_pod_frame}, we now demonstrate its versatility by applying it to two representative classes of problems.

\subsection{Elliptic Inverse Source Problems}\label{sec_elliptic_ip}
% Having applied the proposed method to discrete linear algebraic systems, we now transition to a continuous setting. Specifically, 
We apply the pseudo-time data-driven POD framework developed in \Cref{sec_pod_frame} to the elliptic inverse source problems. To this end, Let $X = L^2(\Omega)$ where $\Omega \subset \mathbb{R}^d$ ($d \in \{1, 2, 3\}$) is an open, bounded domain with either a $C^2$ boundary $\partial\Omega$ or a convex geometry satisfying the uniform cone condition.

Let $\mathcal{L}$ be the second-order elliptic operator defined by $\mathcal{L}u = -\nabla \cdot (a(x)\nabla u) + c(x)u$. To ensure well-posedness, we impose standard assumptions on the coefficients: (i) $a \in C(\bar{\Omega})$ satisfies the uniform ellipticity condition $0 < a_1 \le a(x) \le a_2$; and (ii) $c \in C(\bar{\Omega})$ with $c(x) \ge 0$. Under these conditions, the operator $\mathcal{L}$ naturally induces an energy inner product that is topologically equivalent to the standard $H^1$ inner product, allowing us to simply define $(u, v)_{\mathcal{L}} := (u, v)_{H^1}$ for $u,v\in D(\mathcal{L})$.

By setting the domain as $D(\mathcal{L}) = H^2(\Omega) \cap H^1_0(\Omega)$, the Lax-Milgram lemma and classical elliptic regularity theory guarantee that $\mathcal{L}$ is densely defined, self-adjoint, and strictly positive-definite on $L^2(\Omega)$. Consequently, for any source term $f \in L^2(\Omega)$, the corresponding homogeneous Dirichlet problem
\begin{equation}\label{elliptic_equation}
    \begin{aligned}
        \left\{\begin{array}{ll}
             \mathcal{L} u = f(x) &\text { in }~ \Omega, \\[2mm] 
            u(x)= 0 &\text { on }~ \partial \Omega.
            % u(x,0)= 0 &\text { in }~ \Omega.
        \end{array}\right.
    \end{aligned}  
\end{equation}
admits a unique solution $u \in D(\mathcal{L})$. The forward solution operator $\mathcal{L}^{-1}$ is therefore linear, compact, and self-adjoint, precisely realizing the abstract static model $u = \mathcal{L}^{-1}f$.

However, as emphasized in \Cref{sec_pseudo_time}, a single stationary state is insufficient to provide the rich dataset required for extracting the POD basis functions. To generate the necessary data snapshots, we apply the proposed pseudo-time scheme to this setting. By embedding the static problem \eqref{elliptic_equation} into a dynamic framework with a pseudo-time variable $t$, we construct the following parabolic initial-boundary value problem:
\begin{equation}\label{parabolic_equation}
    \begin{aligned}
        \left\{\begin{array}{lll}
             \hat u_t + \mathcal{L} \hat u = f(x) &\text { in }~ \Omega \times (0, T), \\[2mm] 
            \hat u(x,t)= 0 &\text { on }~ \partial \Omega \times (0, T),\\[2mm]
            \hat u(x,0)= 0 &\text { in }~ \Omega.\end{array}\right.
        \end{aligned}
\end{equation}
This system serves as the exact realization of the abstract evolution equation \cref{eq_evolution}, where the operator $\mathcal{L}$ and the unknown source $f$ remain identical to those in the static case.

To rigorously verify that this setup satisfies \Cref{compact_self_adjoint}, we must demonstrate that the elliptic operator $\mathcal{L}$ and its solution operator $\mathcal{L}^{-1}$ possess the required spectral properties. In our current setting, $\mathcal{L}$ naturally assumes the role of the abstract unbounded operator $\mathcal{A}$, while $\mathcal{L}^{-1}$ corresponds to the compact operator $\mathcal{R}$. The following lemma recalls the classical spectral theory for second-order elliptic operators \cite{agmon2010lectures,fleckinger1986eigenvalues}, confirming that $\mathcal{L}$ provides the exact discrete spectrum and orthonormal basis demanded by our abstract framework.

\begin{lemma}\label[lemma]{elliptic_op_eigenvalue}
    Suppose $\Omega$ is a bounded domain in $\bbR^d$,  and the coefficients satisfy $a$, $c \in C^0(\bar{\Omega})$, $c(x)\geq 0$. Then, the eigenvalue problem
    \begin{equation}\label{elliptic_op_eige_problem}
        \caL \phi = \mu \phi, \quad \text{with} \quad \phi_{\partial \Omega} =0, 
    \end{equation}
    has a countable set of positive eigenvalues $0 < \mu_1 \leq \mu_2 \leq \cdots$. The corresponding eigenfunctions $\{ \phi_k\}_{k=1}^{\infty}$ form a complete orthonormal basis for $L^2(\Omega)$. Moreover, there exist positive constants $C_1, C_2>0$ such that $C_1 k^{\sfrac{2}{d}}\leq \mu_k \leq C_2 k^{\sfrac{2}{d}}$ for all $k=1, 2,  \cdots$.
\end{lemma}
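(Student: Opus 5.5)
The plan is the classical route: realize $\mathcal{L}^{-1}$ as a compact, self-adjoint, positive operator on $L^2(\Omega)$, apply the Hilbert--Schmidt spectral theorem, and then get the eigenvalue asymptotics from Weyl's law via a min--max comparison with the Dirichlet Laplacian.

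\textbf{Step 1 (solution operator).} Consider the bilinear form $B(u,v)=\int_\Omega a\nabla u\cdot\nabla v+cuv\,\rmd x$ on $H_0^1(\Omega)$. It is bounded because $a,c\in C^0(\bar\Omega)$ are bounded. It is coercive because $a\ge a_1>0$, $c\ge 0$, and the Poincaré inequality gives $B(u,u)\ge a_1\|\nabla u\|_{L^2}^2\ge C\|u\|_{H^1}^2$. By Lax--Milgram, for every $f\in L^2(\Omega)$ there is a unique weak solution $u=Sf\in H_0^1(\Omega)$, with $\|Sf\|_{H^1}\le C\|f\|_{L^2}$.

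Since $B$ is symmetric, $S$ is self-adjoint on $L^2$. It is positive, because $(Sf,f)=B(Sf,Sf)>0$ for $f\ne0$, and this also shows $S$ is injective. Composing with the compact Rellich embedding $H_0^1\hookrightarrow L^2$ shows that $S:L^2\to L^2$ is compact.

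\textbf{Step 2 (spectral decomposition).} The spectral theorem for compact self-adjoint operators gives a countable family of positive eigenvalues $\nu_k\downarrow 0$ of $S$. The eigenfunctions form an orthonormal basis of $\overline{\operatorname{Ran}S}=(\ker S)^\perp=L^2(\Omega)$. Setting $\mu_k=1/\nu_k$ yields $\caL\phi_k=\mu_k\phi_k$ weakly, with $\phi_k\in H_0^1$, so the boundary condition holds, and $0<\mu_1\le\mu_2\le\cdots\to\infty$. This verifies \Cref{compact_self_adjoint} with $\mathcal{R}=S$.

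\textbf{Step 3 (two-sided bounds on $\mu_k$).} Use the Courant--Fischer characterization
\[
\mu_k=\min_{\dim W=k,\ W\subset H_0^1}\ \max_{0\ne w\in W}\frac{B(w,w)}{\|w\|_{L^2}^2}.
\]
Pointwise, $a_1\int|\nabla w|^2\le B(w,w)\le (a_2+\|c\|_\infty C_P)\int|\nabla w|^2$, where $C_P$ is the Poincaré constant. Hence $C\lambda_k\le\mu_k\le C'\lambda_k$, with $\lambda_k$ the Dirichlet Laplacian eigenvalues on $\Omega$.

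It then remains to show $\lambda_k\asymp k^{2/d}$. Pick a ball $B_r\subset\Omega\subset B_R$. Domain monotonicity of Dirichlet eigenvalues, which follows from the inclusion $H_0^1(B_r)\subset H_0^1(\Omega)\subset H_0^1(B_R)$ inside min--max, gives $\lambda_k(B_R)\le\lambda_k(\Omega)\le\lambda_k(B_r)$. Each outer quantity satisfies $\lambda_k(B)\asymp k^{2/d}$, which one can check by comparing with cubes inscribed in and circumscribed about the balls. For cubes the eigenvalues are explicit, $\pi^2|n|^2/\ell^2$ with $n\in\mathbb{N}^d$, and lattice-point counting gives $\#\{n:|n|^2\le\lambda\}\asymp\lambda^{d/2}$ for $\lambda\ge\lambda_1$. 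Inverting this count gives $\lambda_k\asymp k^{2/d}$ for every $k\ge1$, not just asymptotically.

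\textbf{Main obstacle.} Steps 1--2 are standard. The delicate part is Step 3: making the constants uniform for all $k$, including small $k$. The lattice count is only asymptotic, so the small-$k$ range needs separate care. For that I would use that $\lambda_k\ge\lambda_1>0$ and $k^{2/d}\ge1$, so for any finite range of $k$ the ratio $\lambda_k/k^{2/d}$ is bounded above and below by positive constants. Those finitely many exceptions are then absorbed into $C_1$ and $C_2$, giving the bound for all $k$ as in the cited references.
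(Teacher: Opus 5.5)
Your proof is correct. The paper gives no proof of this lemma; it quotes classical spectral theory for second-order elliptic operators (the cited references of Agmon and Fleckinger), where $\mu_k\asymp k^{2/d}$ comes from Weyl-type asymptotics.

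You instead work entirely with the form $B$ on $H_0^1(\Omega)$, correctly importing the standing ellipticity bound $a\ge a_1>0$, which the lemma omits but needs. Lax--Milgram plus Rellich give a compact, self-adjoint, injective solution operator. Min--max then places $\mu_k$ between $a_1\lambda_k$ and $(a_2+\|c\|_\infty/\lambda_1)\lambda_k$. Dirichlet domain monotonicity squeezes $\lambda_k(\Omega)$ between the explicit eigenvalues of a circumscribed and an inscribed cube; you can drop the intermediate balls.

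The citation gives the sharp Weyl constant. Your route is self-contained, has explicit constants, and uses only what the lemma assumes: an arbitrary bounded domain and continuous coefficients. It never needs $H^2$ regularity, which for merely continuous $a$ is not available in general.

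Two small points. First, do not infer injectivity of $S$ from $B(Sf,Sf)>0$, since that presupposes $Sf\ne0$. Instead, $Sf=0$ gives $(f,v)_{L^2}=B(Sf,v)=0$ for all $v\in H_0^1(\Omega)$, so $f=0$ by density.

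Second, the small-$k$ ``obstacle'' is not real. The box-counting bounds $\lfloor\sqrt{s/d}\rfloor^{d}\le\#\{n\in\mathbb{N}^d:|n|^2\le s\}\le s^{d/2}$ hold for every $s>0$. For $s\ge d$ the left side is at least $(4d)^{-d/2}s^{d/2}$. Inverting gives $k^{2/d}\le \ell^2\lambda_k(Q_\ell)/\pi^2\le 4d\,k^{2/d}$ for all $k\ge1$ at once, for a cube $Q_\ell$ of side $\ell$. Your fallback of absorbing finitely many $k$ into the constants also works, but is unnecessary. For the same reason, the Weyl's law mentioned in your opening sentence is never needed.
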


By \Cref{elliptic_op_eigenvalue}, the following theorem provides the exponential error bounds between the dynamic trajectory $\hat{u}(\cdot,t)$ of \cref{parabolic_equation} and the stationary solution $u$ of \cref{elliptic_equation}.

\begin{theorem}
    Let $u$ be the solution to the problem \cref{elliptic_equation} and $\hat{u}(\cdot,t)$ be the solution to the problem \cref{parabolic_equation} for a given $f\in L^2(\Omega)$. Then, for any $T>0$, the following error estimates hold:
    \begin{equation}
        \| \hat{u}(\cdot,T) - u \|_{L^2(\Omega)} \leq \frac{e^{-\mu_1 T}}{\mu_1} \| f \|_{L^2(\Omega)} \quad \text{and}\quad \|\hat{u}_t(\cdot,T)|\vert_X \leq e^{-\mu_1 T}\|f|\vert_{L^2(\Omega)}.
    \end{equation}
\end{theorem}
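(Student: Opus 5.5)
The plan is to reduce this statement to \Cref{err_gradient_flow}, verifying that the elliptic setting fits \Cref{compact_self_adjoint} with $X=L^2(\Omega)$, $\mathcal{A}=\mathcal{L}$ and $\mathcal{R}=\mathcal{L}^{-1}$. The earlier discussion already gives that $\mathcal{L}^{-1}$ is linear, compact and self-adjoint on $L^2(\Omega)$. It is injective because $\mathcal{L}^{-1}f=0$ forces $f=\mathcal{L}0=0$. \Cref{elliptic_op_eigenvalue} supplies the eigenpairs $(\mu_k,\phi_k)$ with $0<\mu_1\le\mu_2\le\cdots$, $\mu_k\to\infty$ (from $\mu_k\ge C_1k^{2/d}$), and $\{\phi_k\}$ an orthonormal basis of $L^2(\Omega)$. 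Hence $\mathcal{L}^{-1}\phi_k=\mu_k^{-1}\phi_k$, which is exactly \cref{eq_eig_problem}.

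Next I identify the solutions with the spectral formulas. Expanding $f=\sum f_k\phi_k$, the unique solution of \cref{elliptic_equation} is $u=\sum \mu_k^{-1}f_k\phi_k$, as in \cref{eq_forward_solution}. For \cref{parabolic_equation} I test the equation with $\phi_k$ and use self-adjointness together with the boundary condition to get $(\mathcal{L}\hat u,\phi_k)=\mu_k\hat u_k$. This yields the decoupled ODEs $\hat u_k'+\mu_k\hat u_k=f_k$ with $\hat u_k(0)=0$, so $\hat u_k(t)=\mu_k^{-1}(1-e^{-\mu_k t})f_k$, recovering \cref{eq_evolution_solution}. This series converges in $L^2(\Omega)$ for every $t$. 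By uniqueness of the weak (semigroup) solution of the parabolic problem it is the solution.

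With these representations, the estimates follow verbatim from the proof of \Cref{err_gradient_flow}. Parseval gives $\|\hat u(\cdot,T)-u\|^2=\sum e^{-2\mu_kT}\mu_k^{-2}|f_k|^2$. Monotonicity of $x\mapsto e^{-xT}/x$ then gives the first bound. Similarly $\hat u_t=\sum e^{-\mu_k t}f_k\phi_k$, and the bound $e^{-\mu_kT}\le e^{-\mu_1T}$ gives the second bound (the $\|\cdot\|_X$ there is the $L^2(\Omega)$ norm).

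The only genuine subtlety is justifying the termwise differentiation and the identification of $\hat u_t$ at $t=T>0$. For $T>0$, $\sum \mu_k^2e^{-2\mu_kT}|f_k|^2<\infty$, so the differentiated series converges uniformly near $T$; with this the step is routine.
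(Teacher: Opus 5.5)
Your proposal is correct and follows the paper's route: the paper treats this theorem as an immediate corollary of \Cref{err_gradient_flow}, using \Cref{elliptic_op_eigenvalue} to verify \Cref{compact_self_adjoint} with $\mathcal{A}=\mathcal{L}$ and $\mathcal{R}=\mathcal{L}^{-1}$, and your explicit derivation of the spectral representation is a welcome addition. One minor remark: the weighted condition $\sum \mu_k^2 e^{-2\mu_k T}|f_k|^2<\infty$ is more than you need, because the differentiated series $\sum e^{-\mu_k t} f_k\phi_k$ already converges uniformly on all of $[0,\infty)$, its tail having squared $L^2$ norm at most $\sum_{k>K}|f_k|^2$ for every $t\ge 0$.
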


As discussed in \Cref{sec_pod_construct}, the lack of explicit knowledge of $f$ renders the forward trajectory $\hat{u}(\cdot,t)$ of \cref{parabolic_equation} computationally inaccessible for snapshot generation. To bypass this, we adapt the abstract data-driven adjoint method to our specific PDE setting. By employing the observed data $m(x)$ as a surrogate source, we generate the requisite snapshots through the following adjoint equation to construct the POD subspace $V_{pod}$:

\begin{equation}\label{adjoint_parabolic_equation}
    \begin{aligned}
        \left\{\begin{array}{lll}
             \widetilde u_t + \mathcal{L} \widetilde u = m(x) &\text { in }~ \Omega \times (0, T), \\[2mm] 
            \widetilde u(x,t)= 0 &\text { on }~ \partial \Omega \times (0, T),\\[2mm]
            \widetilde u(x,0)= 0 &\text { in }~ \Omega.\end{array}\right.
        \end{aligned}
\end{equation}
It is crucial to emphasize that, consistent with the theoretical justification provided in \Cref{sec_conv_pod}, the unknown source $f$ must be assumed to reside within an $L$-dimensional subspace (with $L \leq M$). This structural assumption guarantees that the accessible snapshot space generated by the adjoint equation \cref{adjoint_parabolic_equation} perfectly coincides with the inaccessible snapshot space of the forward equation \cref{parabolic_equation}.

For the parabolic equation \cref{parabolic_equation}, standard linear finite element analysis \cite[Theorem 1.1]{thomee1990finite} yields an $L^2$-error of $\mathcal{O}(h^2)$, which directly verifies \Cref{err_galerkin_discretization} with $\varepsilon_h = \mathcal{O}(h^2)$.
Based on this, we now present the main convergence result for our proposed POD reduced-order method.

% While the Galerkin discretization of the classical parabolic equation \cref{parabolic_equation} has been extensively studied with various outcomes, we focus here on a result derived from the classical finite element method.
% \begin{lemma}[Theorem 1.1, \cite{thomee1990finite}]
%     Let $V_h \subset H_0^1(\Omega)$ be the space of piecewise linear continuous finite elements in a quasi-uniform triangulation of $\Omega$ with mesh size $h>0$.  Define $\hat u(t)$ is the exact solution of \cref{parabolic_equation}, and $\hat u_h(t) \in V_h$ is the Galerkin solution. Assume that $f \in L^2(0,T; L^2(\Omega))$. For every $t\in[0,T]$,
%     \begin{equation}
%         \|\hat u(t)-\hat u_h(t)|\vert_{L^2}\leq Ch^2\|f|\vert_{L^2},
%     \end{equation}
%     where the constant $C>0$ is independent of  of $h$ and $f$.
% \end{lemma}

% In the same way, we clarify the definitions of the new inner product $(u,v)_{\mathcal{L}}=(u,v)_{H^1}$ and the coefficient matrix $V_{ij}=(\psi_i,\psi_j)_{H^1}$.

\begin{theorem}
    Let $u$ be the solution to problem \cref{elliptic_equation} and $u_{pod}\in V_{pod}$ be the corresponding POD-Galerkin approximation. Then, the error satisfies
    % Using the test space $V_{pod}$ described above, we obtain the Galerkin solution $u_{pod}$. Consequently, the approximation error between the exact solution  of elliptic equation \cref{elliptic_equation} $u^*$ and $u_{pod}$ is defined as:
    \begin{equation}
        \|u-u_{pod}|\vert_{L^2}\leq C(e^{-C_1 T}+h^2+L^{\frac{2}{d}}\sqrt{\|V|\vert_2\rho})\|f|\vert_{L^2},
    \end{equation}
    where $\rho=\sum_{k=N_{pod}+1}^{2M+1}\lambda_k$ represents the POD truncation error.
    % and the positive constant $C$ is independent of $h$ and $f$.
\end{theorem}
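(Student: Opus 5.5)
The plan is to obtain the statement as a direct specialization of \Cref{err_ture_solution} to the elliptic setting. The argument has two parts: check that the hypotheses of the abstract framework hold, then replace each abstract constant by its concrete counterpart. First, take $X=L^2(\Omega)$, $\mathcal{A}=\mathcal{L}$ and $\mathcal{R}=\mathcal{L}^{-1}$. The discussion preceding \Cref{elliptic_op_eigenvalue} shows that $\mathcal{L}^{-1}$ is compact, self-adjoint and injective on $L^2(\Omega)$. \Cref{elliptic_op_eigenvalue} supplies the orthonormal eigenbasis $\{\phi_k\}$ with eigenvalues $0<\mu_1\le\mu_2\le\cdots$, so \Cref{compact_self_adjoint} is satisfied.

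Next, discretize with $V_h$ the linear finite element space. As noted just before the statement, \cite[Theorem 1.1]{thomee1990finite} gives $\|\hat u(\cdot,t)-\hat u_h(\cdot,t)\|_{L^2}\le Ch^2\|f\|_{L^2}$, which verifies \Cref{err_galerkin_discretization} with $\varepsilon_h=Ch^2$. The norm $\|\cdot\|_{\mathcal A}$ is replaced by the equivalent $H^1$-based norm, so $\mathrm V_{ij}=(\psi_i,\psi_j)_{H^1}$. The norm equivalence \cref{norm_equi} continues to hold, up to constants depending only on $a_1,a_2,\|c\|_\infty$. Under the standing assumption that $f$ lies in the span of $\phi_1,\dots,\phi_L$ with $L\le M$, \Cref{error_parabolic_adjoint_pod_1} and \Cref{error_Ritz} apply verbatim. \Cref{err_ture_solution} then yields
\[
\|u-u_{pod}\|_{L^2}\le C\Big(\frac{e^{-\mu_1T}}{\mu_1}+Ch^2+\frac{\mu_L}{\sqrt{\mu_1}}\sqrt{\|\mathrm V\|_2\rho}\Big)\|f\|_{L^2}.
\]

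Finally, use the Weyl-type bounds of \Cref{elliptic_op_eigenvalue}. Since $\mu_1$ is a fixed positive constant depending only on $\Omega$ and the coefficients, the lower bound $\mu_1\ge C_1$ gives $e^{-\mu_1 T}/\mu_1\le C e^{-C_1T}$ after absorbing $1/\mu_1$ into $C$. The upper bound $\mu_L\le C_2L^{2/d}$ gives $\mu_L/\sqrt{\mu_1}\le CL^{2/d}$. Substituting these bounds and collecting all constants, which are independent of $h$, $L$ and $f$, into a single $C$ produces the stated estimate.

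The step needing the most care is the bookkeeping in the third term. In \Cref{err_ture_solution} the $\theta_3$ contribution is displayed without an explicit factor $\|f\|_X$, whereas the present statement places $\|f\|_{L^2}$ in front of everything. There are two ways to handle this. One is to note that the snapshots are linear in the data, so $\rho$ scales quadratically and can be normalized by $\|f\|_{L^2}^2$. The other is simply to state that this term is understood in the same normalized sense as in \Cref{err_ture_solution}. Beyond this, the only check is that the constants in the $H^1$ norm equivalence are uniform, which follows from uniform ellipticity.
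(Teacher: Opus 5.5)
Your proposal is correct and matches the paper's proof. The paper's proof is the one-line observation that \Cref{err_ture_solution} applies with $\varepsilon_h=\mathcal{O}(h^2)$, and that the bounds $\mu_1\ge C_1$ and $\mu_L\le C_2L^{2/d}$ from \Cref{elliptic_op_eigenvalue} turn $e^{-\mu_1T}/\mu_1$ and $\mu_L/\sqrt{\mu_1}$ into $Ce^{-C_1T}$ and $CL^{2/d}$. Your remark on the missing $\|f\|_{L^2}$ factor in the $\theta_3$ term addresses a point the paper silently glosses over, and your homogeneity fix is valid: under $f\mapsto cf$ the snapshots scale by $c$ and $\rho$ by $c^2$, while $V_{pod}$ and $\mathrm{V}$ are unchanged.
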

\begin{proof}
    It suffices to observe that $\mu_1 \geq C_1$ and $\mu_L\leq C_2 L^{\frac{2}{d}}$. This completes the proof.
    % By substituting the above two lemmas into \Cref{err_ture_solution}, we obtain the desired result.
\end{proof}

\subsection{Fredholm Integral Equations of the First Kind}\label{sec_FIEFK} 

Let $\Omega\subset\mathbb R^d$ ($d \in \{1, 2, 3\}$) be a bounded Lipschitz domain.
We consider the Fredholm integral equation of the first kind:
\begin{equation}\label{eq_int_op_eq}
    \int_{\Omega} K(y, x) f(x) \rmd x = u(y),
\end{equation}
where $f, u \in L^2(\Omega)$ represent the unknown variable and the observation variable, respectively.
% and $K(y, x)$ denotes the Point Spread Function (PSF). 
This equation defines an integral operator $\mathcal{K} : L^2(\Omega) \to L^2(\Omega)$ given by 
\[
(\mathcal{K}f)(y) = \int_{\Omega} K(y, x) f(x) \rmd x.
\]
% $(\mathcal{K}f)(y) = \int_{\Omega} K(y, x) f(x) \rmd x$. 

Furthermore, assume that \(K(y,x)\in L^2(\Omega\times\Omega)\), \(K(y,x)=K(x,y)\), and \(\ker(\mathcal K)=\{0\}\). Then \(\mathcal K\) is a compact self-adjoint operator. Moreover, since
\[
\overline{\operatorname{Ran}(\mathcal K)}
=
\ker(\mathcal K^*)^\perp
=
\ker(\mathcal K)^\perp
=
L^2(\Omega),
\]
the range \(\operatorname{Ran}(\mathcal K)\) is dense in \(L^2(\Omega)\). Consequently, the inverse operator
\[
\mathcal K^{-1}:\operatorname{Ran}(\mathcal K)\to L^2(\Omega)
\]
is well defined on the dense domain \(\operatorname{Ran}(\mathcal K)\). Since \(\mathcal K\) is compact and injective on an infinite-dimensional space, \(\mathcal K^{-1}\) is generally unbounded.

Subsequently, we approximate \(\mathcal K\) by a
quadrature-based finite-rank operator. The Nystr\"om discretization of Fredholm
integral operators is classical; see, e.g.,~\cite{kress1989linear} for convergence results in
spaces of continuous functions. Combining this with the Sobolev embedding theorem, we obtain the following error estimate.

\begin{theorem}\label{thm_inv_dis_err}
Let \(\Omega\subset\mathbb{R}^d\) be a bounded Lipschitz domain, and define
\begin{equation}
    (\mathcal K_h f_h)(y_i)=\sum_{j=1}^n w_jK(y_i,x_j)f(x_j),
\end{equation}
where \(\{x_j\}_{j=1}^n\subset\Omega\) are the quadrature nodes, \(\{w_j\}_{j=1}^n\) are the corresponding weights, and $\{y_i\}_{i=1}^n \subset \Omega$ is a set of evaluation points.
Assume that \(m>\frac{d}{2}\) and that the quadrature rule satisfies
\begin{equation}
    \left|
\int_\Omega v(x)\,dx-\sum_{j=1}^n w_jv(x_j)
\right|
\le Ch^p\|v\|_{H^m(\Omega)},
\qquad \forall v\in H^m(\Omega).
\end{equation}
Assume moreover that
\[
\sup_{y\in\Omega}\|K(y,\cdot)\|_{W^{m,\infty}(\Omega)}<\infty.
\]
Then, for every \(f\in H^m(\Omega)\),
\begin{equation}
    \|\mathcal K f-\mathcal K_h f_h \|_{\infty}
\le Ch^p\|f\|_{H^m(\Omega)}.
\end{equation}
\end{theorem}
% \begin{proof}
% Since \(m>1\) and \(\Omega\subset\mathbb{R}^2\), the Sobolev embedding theorem implies \(H^m(\Omega)\hookrightarrow C^0(\overline{\Omega})\), so \(f(x_j)\) is well defined. For fixed \(y\in\Omega\), let \(v_y(x)=K(y,x)f(x)\). Then
% \[
% (\mathcal K f)(y)-(\mathcal K_h f)(y)
% =
% \int_\Omega v_y(x)\,dx-\sum_{j=1}^N w_jv_y(x_j).
% \]
% By the quadrature estimate,
% \[
% |(\mathcal K f)(y)-(\mathcal K_h f)(y)|
% \le Ch^p\|v_y\|_{H^m(\Omega)}.
% \]
% The product estimate gives
% \[
% \|v_y\|_{H^m(\Omega)}
% \le C\|K(y,\cdot)\|_{W^{m,\infty}(\Omega)}\|f\|_{H^m(\Omega)}
% \le C\|f\|_{H^m(\Omega)}.
% \]
% Hence
% \[
% |(\mathcal K f)(y)-(\mathcal K_h f)(y)|
% \le Ch^p\|f\|_{H^m(\Omega)}.
% \]
% Taking the \(L^2(\Omega)\)-norm in \(y\) yields the result.
% \end{proof}
Hence, based on the same quadrature nodes, the integral equation \cref{eq_int_op_eq} can be discretized into the linear algebraic system
\begin{equation}\label{eq_int_dis}
    \mathcal{K}_h f_h = u_h,
\end{equation}
where $f_h,u_h\in \bbR^n$, and $\mathcal{K}_h\in \bbR^{n\times n}$ is a symmetric positive definite (SPD) matrix.

Next, to generate the snapshots required for constructing the POD subspace, we introduce an artificial continuous time variable $t$ and formulate the following evolution equation:
\begin{equation}\label{eq_int_op_evlo}
    \begin{aligned}
        \left\{\begin{array}{ll}
             (\hat u_{h})_t + \mathcal{K}_h^{-1} \hat u_h = f_h, & t>0, \\[2mm] 
            % \hat u (x,t)= 0 &\text { on }~ \partial \Omega \times (0, T),\\[2mm]
            \hat u_h(0)= 0 .\end{array}\right.
        \end{aligned}
\end{equation}

% Since $\mathcal{K}_h$ is a compact self-adjoint operator, the spectral theorem guarantees that its inverse $\mathcal{K}_h^{-1}$ possesses a discrete spectrum strictly bounded from below. Consequently, \Cref{compact_self_adjoint} is satisfied, which ensures the exponential convergence of the dynamic system and leads to the following error estimates.

%According to the analysis in \Cref{sec_LAE}, \Cref{compact_self_adjoint} is satisfied. 
It is well established that SPD matrices satisfy the following diagonalization property, which ensures that $\mathcal{K}_h^{-1}$ fulfills \Cref{compact_self_adjoint}.
\begin{lemma}\label[lemma]{Linear_op_eigenvalue}
    Let  $\mathcal{K}_h\in \bbR^{n\times n}$ be a SPD matrix. Then, $\mathcal{K}_h$ can be orthogonally diagonalized as follows:
    \begin{equation}
        \mathcal{K}_h Q=Q \Lambda, \quad \Lambda =\operatorname{diag}\{1/\mu_1,\cdots,1/\mu_n\},
    \end{equation}
    where $1/\mu_1\geq \cdots\geq 1/\mu_n >0$ are the eigenvalues of $\mathcal{K}_h$, and the columns of $Q=[\phi_1,\ldots,\phi_n]$ are the corresponding orthonormal eigenvectors.
    Consequently, the inverse matrix $\mathcal{K}_h^{-1}$ possesses the same set of eigenvectors $\{\phi_i\}_{i=1}^n$ associated with the eigenvalues $0<\mu_1\leq\cdots\leq \mu_n$.
\end{lemma}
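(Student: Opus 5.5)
The plan is to invoke the real spectral theorem for symmetric matrices and then read off the statement about the inverse.

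\emph{Diagonalization of $\mathcal{K}_h$.} Since $\mathcal{K}_h$ is real symmetric, the finite-dimensional spectral theorem gives an orthogonal $Q=[\phi_1,\ldots,\phi_n]$ with $Q^\top \mathcal{K}_h Q$ diagonal. I would recall the standard argument for completeness:
\begin{itemize}
\item the characteristic polynomial has a root $\lambda$, and for a unit eigenvector $v$ the identity $\lambda = \bar v^\top \mathcal{K}_h v$ shows $\lambda$ is real;
\item the orthogonal complement of a real eigenvector is $\mathcal{K}_h$-invariant by symmetry;
\item induction on $n$ then produces the orthonormal eigenbasis.
\end{itemize}

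\emph{Positivity and ordering.} Positive definiteness makes every eigenvalue positive, since $\lambda_i = \phi_i^\top \mathcal{K}_h \phi_i > 0$. Hence each eigenvalue can be written as $1/\mu_i$ with $\mu_i>0$. Permuting the columns of $Q$ orders the eigenvalues so that $1/\mu_1\geq\cdots\geq 1/\mu_n$. This yields $\mathcal{K}_h Q = Q\Lambda$ with $\Lambda=\operatorname{diag}\{1/\mu_1,\ldots,1/\mu_n\}$.

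\emph{The inverse.} Positive eigenvalues make $\mathcal{K}_h$ invertible. Inverting $\mathcal{K}_h = Q\Lambda Q^\top$ gives
\[
\mathcal{K}_h^{-1} = Q\Lambda^{-1}Q^\top, \qquad \Lambda^{-1}=\operatorname{diag}\{\mu_1,\ldots,\mu_n\}.
\]
Therefore $\mathcal{K}_h^{-1}\phi_i=\mu_i\phi_i$ with the same eigenvectors, and $0<\mu_1\leq\cdots\leq\mu_n$, because reciprocation reverses the order of positive numbers.

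I would close by noting that this is exactly the finite-dimensional instance of \Cref{compact_self_adjoint}, with $X=\bbR^n$ and $\mathcal{R}=\mathcal{K}_h$. In finite dimensions compactness is automatic, and injectivity follows from positive definiteness.

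There is no real obstacle here. The only point needing care is the ordering convention: the eigenvalues of $\mathcal{K}_h$ are listed in descending order, while those of the inverse come out ascending.
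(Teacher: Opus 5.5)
Your proposal is correct and follows the same route as the paper, which gives no separate argument and simply appeals to the standard spectral theorem for real symmetric positive definite matrices. Your sketch fills in the standard details: real eigenvalues with induction on invariant orthogonal complements, positivity from $\phi_i^\top\mathcal{K}_h\phi_i>0$, reordering the columns of $Q$, and $\mathcal{K}_h^{-1}=Q\Lambda^{-1}Q^\top$, with the order reversed under reciprocation.
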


Since \Cref{Linear_op_eigenvalue} verifies that the inverse matrix $\mathcal{K}_h^{-1}$ fulfills \Cref{compact_self_adjoint}, we can directly apply the general convergence result from \Cref{err_gradient_flow} to this finite-dimensional case. Therefore, we have the following error estimates:
\begin{theorem}
    Let $u_h$ be the solution to the problem \cref{eq_int_dis} and $\hat{u}_h(t)$ be the solution to the problem \cref{eq_int_op_evlo} for a given $f_h\in \bbR^n$. Then, for any $T>0$, the following error estimates hold:
    \begin{equation}
        \| \hat{u}_h(T) - u_h \|_{2} \leq \frac{e^{-\mu_1 T}}{\mu_1} \| f_h \|_{2}\quad \text{and}\quad \|(\hat{u}_{h})_t(T)|\vert_{2} \leq e^{-\mu_1 T}\|f_h|\vert_{2},
    \end{equation}
    where $\mu_1 > 0$ is the smallest eigenvalue of $\mathcal{K}_h^{-1}$.
\end{theorem}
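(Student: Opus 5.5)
The plan is to reduce the statement to \Cref{err_gradient_flow}, with $X=\bbR^n$ carrying the Euclidean inner product, $\mathcal{R}=\mathcal{K}_h$ and $\mathcal{A}=\mathcal{K}_h^{-1}$. By \Cref{Linear_op_eigenvalue}, $\mathcal{K}_h$ is symmetric with orthonormal eigenvectors $\{\phi_i\}_{i=1}^n$ and eigenvalues $1/\mu_1\ge\cdots\ge 1/\mu_n>0$. It is therefore self-adjoint and injective, and it is trivially compact because it acts on a finite-dimensional space. The eigenvectors form an orthonormal basis of $\bbR^n$, and $\mathcal{K}_h^{-1}$ has eigenvalues $0<\mu_1\le\cdots\le\mu_n$ on the same eigenvectors. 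The only part of \Cref{compact_self_adjoint} that does not transfer verbatim is the requirement $\mu_k\to\infty$. That condition is never used in the proof of \Cref{err_gradient_flow}; only the finite ordering $\mu_1\le\mu_k$ is used. The sums in that proof simply become finite sums over $k=1,\dots,n$.

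To be safe, I will repeat the short argument directly rather than quote the theorem. First, expand $f_h=\sum_{k=1}^n f_k\phi_k$ with $f_k=\phi_k^\top f_h$. Then $u_h=\mathcal{K}_h f_h$ is the solution of \cref{eq_int_dis} in the sense $\mathcal{K}_h^{-1}u_h=f_h$, so
\[
u_h=\sum_{k=1}^n \frac{f_k}{\mu_k}\phi_k .
\]
Next, project \cref{eq_int_op_evlo} onto each $\phi_k$. This gives the decoupled scalar ODEs $\hat u_k'+\mu_k\hat u_k=f_k$ with $\hat u_k(0)=0$, whose solutions are
\[
\hat u_h(t)=\sum_{k=1}^n\frac{1}{\mu_k}\bigl(1-e^{-\mu_k t}\bigr)f_k\phi_k .
\]
This representation is rigorous here, since the system is a linear ODE on $\bbR^n$ with a unique solution.

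Taking the difference gives $\hat u_h(T)-u_h=-\sum_k \mu_k^{-1}e^{-\mu_k T}f_k\phi_k$. By Parseval's identity in $\bbR^n$ and the monotonicity of $x\mapsto e^{-xT}/x$, the first bound follows. For the second bound, use $(\hat u_h)_t=f_h-\mathcal{K}_h^{-1}\hat u_h=\mathcal{K}_h^{-1}(u_h-\hat u_h)=\sum_k e^{-\mu_k T}f_k\phi_k$ and the inequality $e^{-\mu_k T}\le e^{-\mu_1T}$.

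There is no genuine obstacle. The only point needing care is the remark that the asymptotic condition $\mu_k\to\infty$ in \Cref{compact_self_adjoint} is irrelevant in finite dimensions. The estimates depend only on $\mu_1$ being the smallest eigenvalue of $\mathcal{K}_h^{-1}$, that is, on $1/\mu_1$ being the largest eigenvalue of $\mathcal{K}_h$.
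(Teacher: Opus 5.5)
Your proposal is correct and follows the paper's route. The paper invokes \Cref{Linear_op_eigenvalue} to place $\mathcal{K}_h^{-1}$ under \Cref{compact_self_adjoint} and then applies \Cref{err_gradient_flow}; that is your reduction, and your finite-sum rederivation is a self-contained version of it, including your correct remark that the condition $\mu_k\to\infty$ is vacuous in $\bbR^n$ and never used.
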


Since $f_h$ is the unknown, the forward trajectory \cref{eq_int_op_evlo} is computationally inaccessible. To generate snapshots, we replace $f_h$ with the observed data $m_h$ and formulate the following computable adjoint equation:

\begin{equation}\label{eq_int_pod_basis_generate}
    \begin{aligned}
        \left\{\begin{array}{ll}
             (\widetilde u_{h})_t + \mathcal{K}_h^{-1} \widetilde u_h = m_h, &t > 0, \\[2mm] 
             % \widetilde u (x,t)= 0 &\text { on }~ \partial \Omega \times (0, T),\\[2mm]
            \widetilde u_h(0)= 0.\end{array}\right.
        \end{aligned}
\end{equation}

\begin{remark}
    To solve \cref{eq_int_pod_basis_generate}, it is not necessary to compute the inverse matrix $\mathcal{K}_h^{-1}$ explicitly. Multiplying both sides of \cref{eq_int_pod_basis_generate} by $\mathcal{K}_h$, we obtain the equivalent formulation
    \begin{equation}
        \mathcal{K}_h (\widetilde u_{h})_t + \widetilde u_h = \mathcal{K}_h m_h.
    \end{equation}
\end{remark}

Once the POD subspace $V_{pod}$ is extracted from these adjoint snapshots, we compute the reduced-order approximation $u_{pod} \in V_{pod}$. The total error of this approximation is rigorously bounded as follows.

\begin{theorem}
    Let $u$ be the solution to problem \cref{eq_int_op_eq} and $u_{pod}\in V_{pod}$ be the corresponding POD-Galerkin approximation. Then, the error satisfies
    \begin{equation}
        \|u-u_{pod}|\vert_{n}\leq C(\frac{e^{-\mu_1 T}}{\mu_1}+h^p+\frac{\mu_L}{\sqrt{\mu_1}}\sqrt{\|V|\vert_2\rho})\|f|\vert_{H^m},
    \end{equation}
    where $\rho=\sum_{k=N_{pod}+1}^{2M+1}\lambda_k$, and $\|\cdot \|_n$ is defined by $\|u|\vert_n=(\sum_{i=1}^n (u_i)^2/n)^{1/2}$, $\|v|\vert_n=(\sum_{i=1}^n(v(x_i)^2/n)^{1/2}$, $\forall u\in \bbR^n, v\in C(\bar \Omega)$.
    % and the positive constant $C$ is independent of $h$ and $f$.
\end{theorem}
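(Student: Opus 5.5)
The plan is to mirror the proof of \Cref{err_ture_solution}, using the $\|\cdot\|_n$ norm on nodal vectors, and to add one extra term for the Nystr\"om quadrature error of \Cref{thm_inv_dis_err}. Let $u_h=\mathcal K_h f_h$ be the discrete solution of \cref{eq_int_dis}, with $f_h=(f(x_j))_j$. Let $\hat u_h$ solve \cref{eq_int_op_evlo}, and let $\mathcal{P}^{N_{pod}}$ be the Ritz projection onto $V_{pod}$ with respect to the SPD form $a(v,w)=w^\top\mathcal K_h^{-1}v$. I split the error as
\begin{equation*}
u-u_{pod}=\underbrace{(u-u_h)}_{\theta_0}+\underbrace{(u_h-\hat u_h(T))}_{\theta_1}+\underbrace{(\hat u_h(T)-\mathcal{P}^{N_{pod}}\hat u_h(T))}_{\theta_2}+\underbrace{(\mathcal{P}^{N_{pod}}\hat u_h(T)-u_{pod})}_{\theta_3}.
\end{equation*}
Here $u$ is evaluated at the nodes $y_i$.

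\textbf{Term $\theta_0$.} Since $\|v\|_n\le\|v\|_\infty$ for nodal evaluation, \Cref{thm_inv_dis_err} gives $\|\theta_0\|_n\le Ch^p\|f\|_{H^m}$. This plays the role of $\varepsilon_h$ in \Cref{err_galerkin_discretization}.

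\textbf{Term $\theta_1$.} By the finite-dimensional version of \Cref{err_gradient_flow} stated above, $\|\theta_1\|_2\le \frac{e^{-\mu_1T}}{\mu_1}\|f_h\|_2$. Here I must reconcile scalings. I will either use the weighted inner product $(v,w)=\frac1n\sum_i v_iw_i$ throughout, so that $\|\cdot\|_n$ is the Hilbert norm of $X=\bbR^n$ and Lemma~\ref{Linear_op_eigenvalue} still applies with orthonormality in this inner product, or simply note that $\|\cdot\|_n=n^{-1/2}\|\cdot\|_2$ is applied to both sides. Then $\|f_h\|_n\le\|f\|_{C(\bar\Omega)}\le C\|f\|_{H^m}$ by the Sobolev embedding, since $m>d/2$.

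\textbf{Terms $\theta_2$ and $\theta_3$.} For $\theta_2$ I invoke \Cref{error_Ritz}, which rests on \Cref{error_parabolic_adjoint_pod_1} and the pointwise lemma with the assumption $L\le M$. These results were proved for an abstract $X$ satisfying \Cref{compact_self_adjoint}, which holds here by Lemma~\ref{Linear_op_eigenvalue}. They give $\|\theta_2\|_n\le C\frac{\mu_L}{\sqrt{\mu_1}}\sqrt{\|V\|_2\rho}$. For $\theta_3$ I repeat the energy argument from the proof of \Cref{err_ture_solution}. Subtracting the stationary equation from the evolution equation and testing with $\mathcal{P}^{N_{pod}}(u_h-\hat u_h)$ gives $\|\theta_3\|\le\frac1{\mu_1}\|(\hat u_h)_t(T)\|\le \frac{e^{-\mu_1T}}{\mu_1}\|f_h\|$. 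For this I use that $u_{pod}=\mathcal{P}^{N_{pod}}u_h$, which holds because both are Galerkin solutions of $a(\cdot,\psi)=(f_h,\psi)$ on $V_{pod}\subset\bbR^n$.

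\textbf{Main obstacle.} The hard step is bookkeeping rather than analysis. The POD estimate contains no factor of $\|f\|$, unlike the target bound. I would absorb this as the paper does, by normalizing so that $\rho$ scales with $\|f_h\|^2$, or by allowing $C$ to depend on $f$. I would also check consistently that every estimate is in the same (weighted) Euclidean norm, so the Sobolev bound $\|f_h\|_n\le C\|f\|_{H^m}$ closes the chain. Summing the four bounds with the triangle inequality yields the claim.
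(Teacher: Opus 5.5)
Your proposal follows essentially the same route as the paper. The paper establishes the discrete norm equivalence for $\mathcal{K}_h^{-1}$, applies \Cref{err_ture_solution} with $\varepsilon_h=0$, converts $\|f_h\|_2/\sqrt{n}\le\|f_h\|_\infty\le C\|f\|_{H^m}$ by the Sobolev embedding, and adds the quadrature error from \Cref{thm_inv_dis_err} by the triangle inequality; you simply unpack the four-term decomposition of \Cref{err_ture_solution} instead of citing it. The homogeneity mismatch you flag (the POD term carries no factor of $\|f\|$, and $\rho$ already scales like $\|f\|^2$) is inherited from \Cref{err_ture_solution} and is not resolved in the paper's proof either, so it is not a defect specific to your argument.
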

\begin{proof}
    Because the inverse matrix $\mathcal{K}_h^{-1}$ is inherently SPD, the general norm equivalence established in \cref{norm_equi} naturally reduces to the discrete case. By equipping $\bbR^n$ with the energy inner product $(\mathbf{z}_1, \mathbf{z}_2)_{\mathcal{K}_h^{-1}} := (\mathcal{K}_h^{-1}\mathbf{z}_1, \mathbf{z}_2)_2$ for $\mathbf{z}_1, \mathbf{z}_2 \in \bbR^n$, we obtain the following explicit bounds for any $\mathbf{z} \in \bbR^n$:
    \begin{equation}
        \sqrt{\mu}_1 \|\mathbf{z}\|_2 \le \|\mathbf{z}\|_{\mathcal{K}_h^{-1}} \le \sqrt{\|\mathrm{V}\|_2} \|\mathbf{z}\|_2,
    \end{equation}
    where $\mathrm{V} \in \bbR^{N\times N}$ ($N$ is the the dimension of the snapshot space) is the Gram matrix with $\mathrm{V}_{ij}=(\mathbf{\psi}_i,\mathbf{\psi}_j)_{\mathcal{K}_h^{-1}}$, and $\mathbf{\psi}_i$ represents the POD basis vectors. 

    Finally, by setting $\varepsilon_h =0$ in \Cref{err_galerkin_discretization}, a direct application of \Cref{err_ture_solution} yields the error estimate for the proposed method applied to the integral equation:
    %Based on the analysis of the matrix in \Cref{sec_LAE}, we have
    \begin{equation}
        \|u_h-u_{pod}\|_2\leq C(\frac{e^{-\mu_1 T}}{\mu_1}+\frac{\mu_L}{\sqrt{\mu_1}}\sqrt{\|V|\vert_2\rho})\|f_h|\vert_2.
    \end{equation}
    Combining the Sobolev embedding theorem, we get
    \begin{equation}
        \frac{1}{\sqrt{n}}\|f_h|\vert_2\leq \|f_h|\vert_{\infty}\leq \|f|\vert_{H^m(\Omega)}.
    \end{equation}
    Using \Cref{thm_inv_dis_err} and the triangle inequality, we obtain
    \begin{equation}
        \|u-u_{pod}|\vert_{n}\leq C(\frac{e^{-\mu_1 T}}{\mu_1}+h^p+\frac{\mu_L}{\sqrt{\mu_1}}\sqrt{\|V|\vert_2\rho})\|f|\vert_{H^m}.
    \end{equation}
    This completes the proof.
\end{proof}

\section{Numerical Experiments}\label{sec_pod_num}
In this section, we present a comprehensive series of numerical experiments to 
% validate the theoretical error estimates established in \Cref{sec_pod_application} and to 
demonstrate the computational efficiency of the proposed method. The corresponding optimization problem is formulated as
\begin{equation}\label{opti_problem}
    \min_{f\in X} \| \mathcal{R} f -m \|_n^2 + \lambda \|f\|_X^2,
\end{equation}
where $m$ denotes the observed data and $\lambda>0$ is the regularization parameter. Problem
\Cref{opti_problem} is solved by the gradient descent method.
All computations are implemented in MATLAB and executed on a 64-bit workstation equipped with an Intel Core Ultra 9 285H processor (2.90 GHz), 64 GB of RAM, and an NVIDIA RTX PRO 2000 GPU.

\subsection{Elliptic inverse source problem}

We consider the inverse source problem on the unit square $\Omega=(0,1)^2$, governed by the following equation:
\begin{equation}\label{num_exa_inv_pro_eq}
    \begin{aligned}
        \left\{\begin{array}{ll}
              \mathcal{L} u = f(\boldsymbol{x}) &\text { in }~ \Omega, \\[2mm] 
            u(\boldsymbol{x})= 0 &\text { on }~ \partial \Omega，\\[2mm]
            % u(x,0)= 0 &\text { in }~ \Omega.
        \end{array}\right.
    \end{aligned} 
\end{equation}
where $\mathcal{L}$ is defined as $\mathcal{L}u = -\nabla \cdot (a(\boldsymbol{x})\nabla u) + c(\boldsymbol{x})u$. For $\boldsymbol{x} = (x,y) \in \Omega$, the spatially varying coefficients are given by $a(\boldsymbol{x}) = 1 + 0.3\sin(\pi x)\sin(\pi y)$ and $c(\boldsymbol{x}) = \sin(\pi x)\sin(\pi y)$.

The objective is to reconstruct the unknown source $f(\boldsymbol{x})$ from the observed data $m(\boldsymbol{x})$. To generate the data, we solve \eqref{num_exa_inv_pro_eq} with the exact source using continuous piecewise linear ($\mathbb{P}_1$) finite elements on a uniform spatial mesh with mesh size $h = 1/50$. The resulting finite element solution is then interpolated onto a $300 \times 300$ grid of uniformly distributed sensors. 

To construct the POD basis, $500$ snapshots are extracted over the time interval $(0, 1]$ by solving the adjoint equation \eqref{adjoint_parabolic_equation}. This time evolution is discretized on the same spatial mesh using a backward Euler scheme with a time step size of $\Delta t = 1/500$. Moreover, For the optimization problem \cref{opti_problem}, the penalty norm is uniformly defined as $\|\cdot\|^2_X = \|\cdot \|_{L^2(\Omega)}^2$, and the gradient descent algorithm is terminated once 
% the relative $L^2$ error drops below the $10^{-5}$ threshold. 
the error satisfies $\| f_{fem} - f^*\|_{L^2(\Omega)}^2 \leq \varepsilon$ (or $\| f_{pod} - f^*\|_{L^2(\Omega)}^2 \leq \varepsilon$) for a prescribed tolerance  $\varepsilon$.

\begin{myexample}\label{elliptic_pod_performance}
    We validate the proposed method against the FEM using $9$ POD basis functions under two source configurations.
    For the double-Gaussian source, given by
    \begin{equation*}
        f = \exp\left(-\frac{(x-0.3)^2+(y-0.3)^2}{0.02}\right) + \exp\left(-\frac{(x-0.7)^2+(y-0.7)^2}{0.02}\right),
    \end{equation*}
    we set $\lambda = 6.1054 \times 10^{-10}$ and $\varepsilon = 1.0 \times 10^{-5}$. The reconstructed results and computational costs are shown in \Cref{gaussian_compar_effectiveness_pod_fem} and \Cref{Tab_gaussian_fem_pod}.
    % For the star-shaped source, we set $\lambda = 6.1136 \times 10^{-10}$ and $\varepsilon = 1.0 \times 10^{-2}$. The corresponding reconstruction and time comparisons are illustrated in \Cref{star_compar_effectiveness_pod_fem} and \Cref{Tab_star_fem_pod}.

     % In this example, the validity of the proposed method is demonstrated through a comparison with the FEM, where 9 POD basis functions are employed. The exact source term is given by $f^* = e^{-\frac{(x-0.3)^2+(y-0.3)^2}{0.02}} + e^{-\frac{(x-0.7)^2+(y-0.7)^2}{0.02}}$. For the optimization problem \cref{opti_problem}, we set the regularization parameter to $\lambda = 6.1054 \times 10^{-10}$ and define the penalty norm as $\|\cdot\|^2_X = \|\cdot \|_{L^2(\Omega)}^2$. The gradient descent iteration is terminated when the relative $L^2$ error falls below a threshold of $10^{-5}$. The corresponding reconstruction results and computational times are presented in \Cref{gaussian_compar_effectiveness_pod_fem} and \Cref{Tab_gaussian_fem_pod}, respectively.

     % The exact source term is $f$ of star-shaped function
     % For the optimization problem \cref{opti_problem}, we set the regularization parameter to $\lambda = 6.1136 \times 10^{-10}$ and define the penalty norm as $\|\cdot\|^2_X = \|\cdot \|_{L^2(\Omega)}^2$. The gradient descent iteration is terminated when the relative $L^2$ error falls below a threshold of $10^{-5}$. The corresponding reconstruction results and computational times are presented in \Cref{star_compar_effectiveness_pod_fem} and \Cref{Tab_star_fem_pod}, respectively.

    \begin{figure}[htbp]
    \centering
    % 第一个子图
    \begin{subfigure}[t]{0.3\textwidth} 
        \vspace{0pt} 
        \centering
        \includegraphics[width=\textwidth]{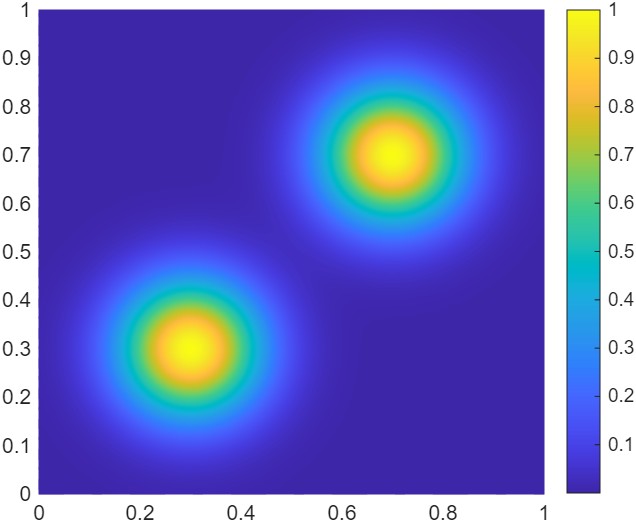}
        \caption{Exact source term} 
        \label{gaussian_exact} 
    \end{subfigure}
    \hfill % 在两个子图之间添加水平间距，使其左右对齐
    % 第二个子图
    \begin{subfigure}[t]{0.3\textwidth}  % 同上，保证两个子图加起来不超过textwidth并留有间隙
    \vspace{0pt} 
        \centering
        \includegraphics[width=\textwidth]{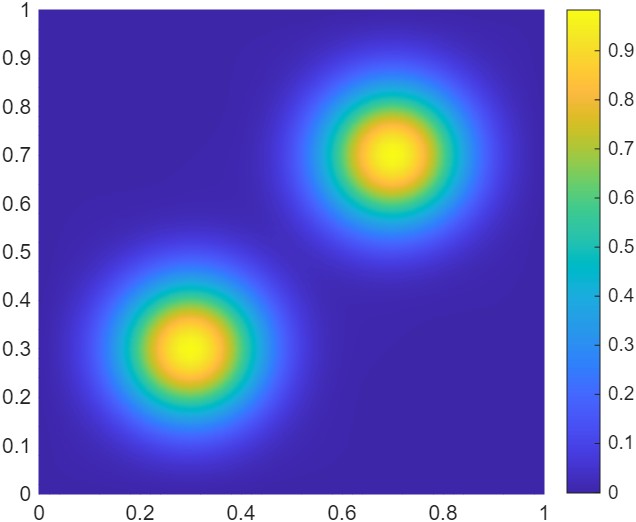}
        \caption{Reconstruction result by the FEM} % 子图的独立标题
        \label{gaussian_fem} % 子图的独立标签
    \end{subfigure}
    \hfill % 在两个子图之间添加水平间距，使其左右对齐
    % 第三个子图
    \begin{subfigure}[t]{0.3\textwidth}  % 同上，保证两个子图加起来不超过textwidth并留有间隙
    \vspace{0pt} 
        \centering
        \includegraphics[width=\textwidth]{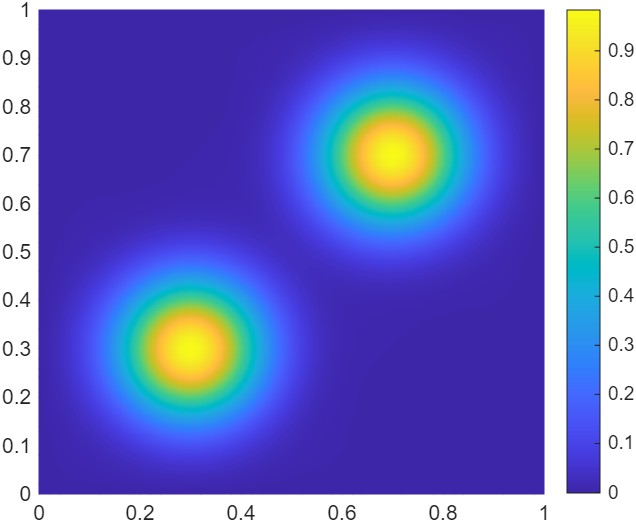}
        \caption{Reconstruction result by the POD} % 子图的独立标题
        \label{gaussian_pdo} % 子图的独立标签
    \end{subfigure}
    % 整个图的共同标题
    \caption{ Comparison of FEM and POD reconstruction results for  double-Gaussian source in \Cref{elliptic_pod_performance}}
    \label{gaussian_compar_effectiveness_pod_fem} % 整个图的共同标签
    \end{figure}

    % \begin{figure}[htbp]
    % \centering
    % % 第一个子图
    % \begin{subfigure}[t]{0.3\textwidth} 
    %     \vspace{0pt} 
    %     \centering
    %     \includegraphics[width=\textwidth]{pictures/star.jpg}
    %     \caption{Exact source term} 
    %     \label{star_exact} 
    % \end{subfigure}
    % \hfill % 在两个子图之间添加水平间距，使其左右对齐
    % % 第二个子图
    % \begin{subfigure}[t]{0.3\textwidth}  % 同上，保证两个子图加起来不超过textwidth并留有间隙
    % \vspace{0pt} 
    %     \centering
    %     \includegraphics[width=\textwidth]{pictures/star_fem.jpg}
    %     \caption{Recovered result by the FEM} % 子图的独立标题
    %     \label{star_fem} % 子图的独立标签
    % \end{subfigure}
    % \hfill % 在两个子图之间添加水平间距，使其左右对齐
    % % 第三个子图
    % \begin{subfigure}[t]{0.3\textwidth}  % 同上，保证两个子图加起来不超过textwidth并留有间隙
    % \vspace{0pt} 
    %     \centering
    %     \includegraphics[width=\textwidth]{pictures/star_pod.jpg}
    %     \caption{Recovered result by the POD} % 子图的独立标题
    %     \label{star_pod} % 子图的独立标签
    % \end{subfigure}
    % % 整个图的共同标题
    % \caption{Comparison of FEM and POD reconstruction results for $f^*$ of star-shaped source in \Cref{elliptic_pod_performance}}
    % \label{star_compar_effectiveness_pod_fem} % 整个图的共同标签
    % \end{figure}

    \begin{table}[ht]
    \caption{ Comparison of computational time between FEM and POD for double-Gaussian source in \Cref{elliptic_pod_performance}}
    \centering
    % \footnotesize
    % \normalsize 
    % \scalebox{1.2}{
    \begin{tabular}{lccc} 
    \toprule % 加粗顶部横线（1.5pt 线宽）
       Method    & Off. (s)                    & Opti. (s)     & Total (s)       \\ \midrule
      FEM                          &-- & 16.042 & 16.042  \\ \midrule
      POD                        & 1.334 &3.079   & 4.413 \\ \bottomrule
    \end{tabular}
    \label{Tab_gaussian_fem_pod}
    \end{table}

    % \begin{table}[ht]
    % \caption{ Comparison of computational time between FEM and POD for $f^*$ of star-shaped source in \Cref{elliptic_pod_performance}}
    % \centering
    % % \footnotesize
    % % \normalsize 
    % % \scalebox{1.2}{
    % \begin{tabular}{lccc} 
    % \toprule % 加粗顶部横线（1.5pt 线宽）
    %    Method     & Off. (s)                    & Opti. (s)     & Total (s)       \\ \midrule
    %   FEM                          &-- & 59.239 & 59.239  \\ \midrule
    %   POD                        & 3.493 &10.394   & 13.887 \\ \bottomrule
    % \end{tabular}
    % \label{Tab_star_fem_pod}
    % \end{table}
\end{myexample}

\begin{myexample}\label{example_elliptic_pod_performance}
    To evaluate the algorithm's robustness against varying noise levels, we examine a star-shaped source. For a $2$\% noise level, the reconstruction utilizes $7$ POD basis functions with a fixed tolerance of $\varepsilon = 1.31 \times 10^{-2}$ and a regularization parameter of  $\lambda = 1.0153 \times 10^{-7}$. When the noise level increases to $5$\%, we employ $10$ POD basis functions, setting the tolerance to $\varepsilon = 1.92 \times 10^{-2}$ and $\lambda = 3.6326 \times 10^{-7}$. As illustrated in \Cref{star_compar_effectiveness_pod_fem}, the reconstructed results successfully preserve the structural integrity of the source despite these perturbations.  The corresponding computational costs are shown in \Cref{Tab_star_fem_pod}.
    
    % a multi-peak Gaussian source $f$ given by
    % \begin{equation}\label{mutl_peak_Gaussian_eq}
    % \begin{aligned}
    %     f =\ & 0.8\exp\left(-\frac{(x-0.3)^2+(y-0.3)^2}{0.02}\right) + 0.6\exp\left(-\frac{(x-0.8)^2+(y-0.6)^2}{0.015}\right) \\
    %      & + 0.4\exp\left(-\frac{(x-0.5)^2+(y-0.8)^2}{0.01}\right) + 0.5\exp\left(-\frac{(x-0.2)^2+(y-0.7)^2}{0.008}\right).
    % \end{aligned}
    % \end{equation}
    % Using 7 POD basis functions, we perform the reconstructions with a fixed tolerance of $\varepsilon = 1.31 \times 10^{-2}$. The regularization parameter is set to 
    % $\lambda = 1.0153 \times 10^{-7}$ for a 2\% noise level.
    % Using 10 POD basis functions, we perform the reconstructions with a fixed tolerance of $\varepsilon = 1.92 \times 10^{-2}$. The regularization parameter is set to 
    % $\lambda = 3.6326 \times 10^{-7}$ for a 5\% noise level.
    % % and increased to $\lambda = 1.2014 \times 10^{-7}$ for a 3\% noise level. 
    % As depicted in \Cref{star_compar_effectiveness_pod_fem}, the reconstructed results successfully preserve the structural integrity of the source despite these perturbations.

     \begin{figure}[htbp]
    \centering
    % 第一个子图
    \begin{subfigure}[t]{0.3\textwidth} 
        \vspace{0pt} 
        \centering
        \includegraphics[width=\textwidth]{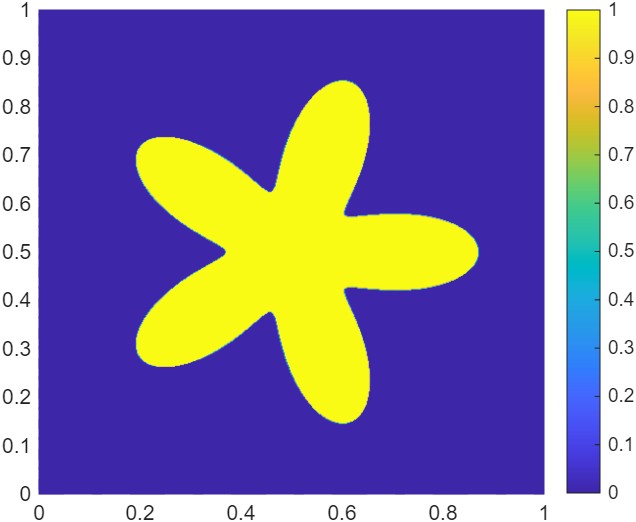}
        \caption{Exact source term} 
        \label{star_exact} 
    \end{subfigure}
    \hfill % 在两个子图之间添加水平间距，使其左右对齐
    % 第二个子图
    \begin{subfigure}[t]{0.3\textwidth}  % 同上，保证两个子图加起来不超过textwidth并留有间隙
    \vspace{0pt} 
        \centering
        \includegraphics[width=\textwidth]{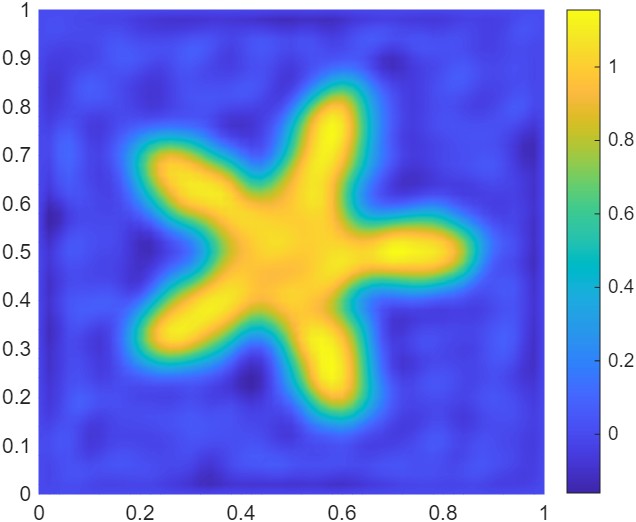}
        \caption{Recovered result by the FEM with 2\% noise} % 子图的独立标题
        \label{star_fem} % 子图的独立标签
    \end{subfigure}
    \hfill % 在两个子图之间添加水平间距，使其左右对齐
    % 第三个子图
    \begin{subfigure}[t]{0.3\textwidth}  % 同上，保证两个子图加起来不超过textwidth并留有间隙
    \vspace{0pt} 
        \centering
        \includegraphics[width=\textwidth]{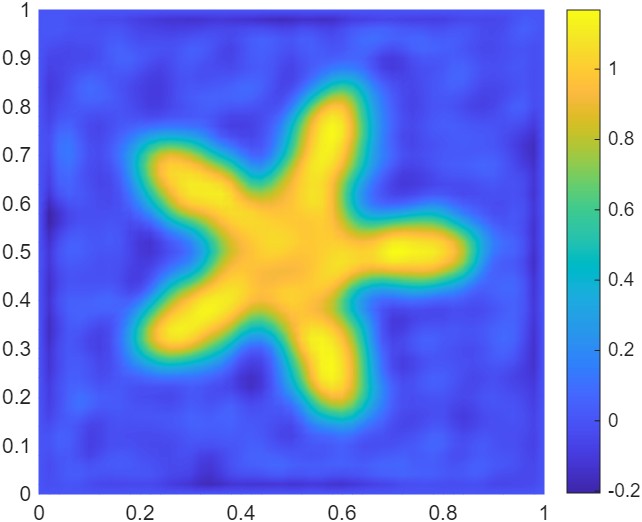}
        \caption{Recovered result by the POD with 2\% noise} % 子图的独立标题
        \label{star_pod} % 子图的独立标签
    \end{subfigure}
    \vspace{6mm}
    % 第一个子图
    % \hfill
    % \begin{subfigure}[t]{0.3\textwidth} 
    %     \vspace{0pt} 
    %     \centering
    %     \includegraphics[width=\textwidth]{pictures/star.jpg}
    %     \caption{Exact source term} 
    %     % \label{star_exact} 
    % \end{subfigure}
    % \hfill % 在两个子图之间添加水平间距，使其左右对齐
    % 第二个子图
    \begin{subfigure}[t]{0.3\textwidth}  % 同上，保证两个子图加起来不超过textwidth并留有间隙
    \vspace{0pt} 
        \centering
        \includegraphics[width=\textwidth]{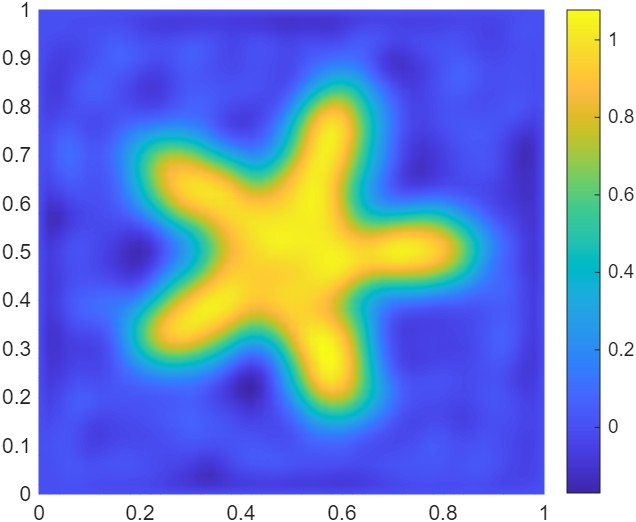}
        \caption{Recovered result by the FEM with 5\% noise} % 子图的独立标题
        % \label{star_fem} % 子图的独立标签
    \end{subfigure}
    \hfill % 在两个子图之间添加水平间距，使其左右对齐
    % \hspace{0.08\textwidth} 
    % 第三个子图
    \begin{subfigure}[t]{0.3\textwidth}  % 同上，保证两个子图加起来不超过textwidth并留有间隙
    \vspace{0pt} 
        \centering
        \includegraphics[width=\textwidth]{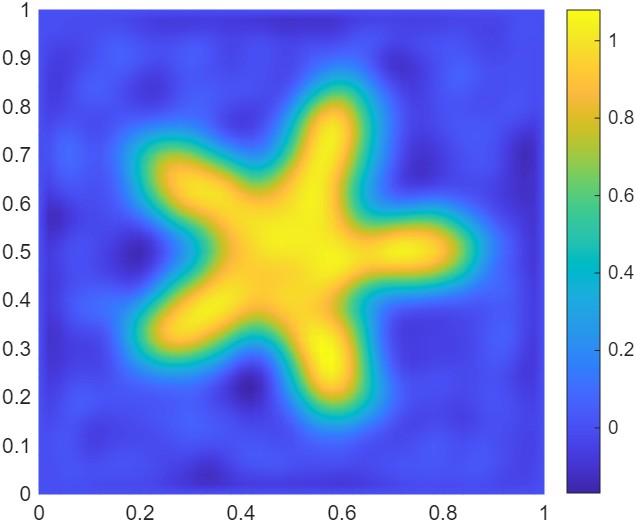}
        \caption{Recovered result by the POD with 5\% noise} % 子图的独立标题
        % \label{star_pod} % 子图的独立标签
    \end{subfigure}
    \hfill 
    \hspace*{0.3\textwidth} 
    % 整个图的共同标题
    \caption{Comparison of FEM and POD reconstruction results for $f^*$ of star-shaped source in \Cref{example_elliptic_pod_performance}}
    \label{star_compar_effectiveness_pod_fem} % 整个图的共同标签
    \end{figure}

    \begin{table}[ht]
    \caption{ Comparison of computational time between FEM and POD for $f^*$ of star-shaped source in \Cref{example_elliptic_pod_performance}}
    \centering
    % \footnotesize
    % \normalsize 
    % \scalebox{1.2}{
    \begin{tabular}{lcccc} 
    \toprule % 加粗顶部横线（1.5pt 线宽）
       Method  & Noise-level   & Off. (s)                    & Opti. (s)     & Total (s)       \\ \midrule
      FEM         &    2\%             &-- & 126.544 & 126.544  \\ \midrule
      POD        &   2\%             & 3.316 &14.763   & 18.079  \\ \midrule
      FEM        &   5\%             & -- &106.264   & 106.264 \\ \midrule
      POD        &   5\%             & 3.288 &14.898   & 18.186  \\
      \bottomrule
    \end{tabular}
    \label{Tab_star_fem_pod}
    \end{table}

    % \begin{figure}[htbp]
    % \centering
    % % 第一个子图
    % \begin{subfigure}[t]{0.3\textwidth} 
    %     \vspace{0pt} 
    %     \centering
    %     \includegraphics[width=\textwidth]{pictures/smooth_bump.jpg}
    %     \caption{Exact source term} 
    %     \label{smooth_bump_exact} 
    % \end{subfigure}
    % \hfill % 在两个子图之间添加水平间距，使其左右对齐
    % % 第二个子图
    % \begin{subfigure}[t]{0.3\textwidth}  % 同上，保证两个子图加起来不超过textwidth并留有间隙
    % \vspace{0pt} 
    %     \centering
    %     \includegraphics[width=\textwidth]{pictures/smooth_bump_pod_noise_01.jpg}
    %     \caption{Recovered result for 1\% noise} % 子图的独立标题
    %     \label{smooth_bump_noise_02} % 子图的独立标签
    % \end{subfigure}
    % \hfill % 在两个子图之间添加水平间距，使其左右对齐
    % % 第三个子图
    % \begin{subfigure}[t]{0.3\textwidth}  % 同上，保证两个子图加起来不超过textwidth并留有间隙
    % \vspace{0pt} 
    %     \centering
    %     \includegraphics[width=\textwidth]{pictures/smooth_bump_pod_noise_03.jpg}
    %     \caption{Recovered result for 3\% noise} % 子图的独立标题
    %     \label{smooth_bump_noise_05} % 子图的独立标签
    % \end{subfigure}
    % % 整个图的共同标题
    % \caption{Recovering multi-peak Gaussian source with different noise level for \Cref{example_elliptic_pod_performance}}
    % \label{smooth_bump_compar_noise_pod} % 整个图的共同标签
    % \end{figure}

\end{myexample}

\begin{myexample}\label{exam_relative_noise_star}
Given that the true source is unavailable in inverse problems, we employ the relative $L^2$-norm stopping criterion given by $\| f^{i+1} - f^{i}\|_{L^2(\Omega)} / \| f^{i+1} \|_{L^2(\Omega)} \leq 1.0 \times 10^{-5}$.
To evaluate the proposed method, we consider a star-shaped source. For a $1$\% noise level, the reconstruction utilizes $7$ POD basis functions  with the regularization parameter $\lambda = 3.9042 \times 10^{-8}$. The POD reconstruction is illustrated in \Cref{star_noise_relative_pod_result}, and the associated computational costs are summarized in \Cref{star_noise_relative_fem_pod_time_result}.

\begin{figure}[htbp]
    \centering
    
    % 第一个 minipage 放图片
    % 使用 [t] 顶部对齐，宽度放大到 0.45 防止拥挤
    \begin{minipage}[t]{0.35\textwidth}
        \vspace{0pt} % 关键排版魔法：强制定义顶部基线
        \centering
        % 如果觉得图片太大，可以把 \textwidth 改成 0.9\textwidth 等微调
        \includegraphics[width=\textwidth]{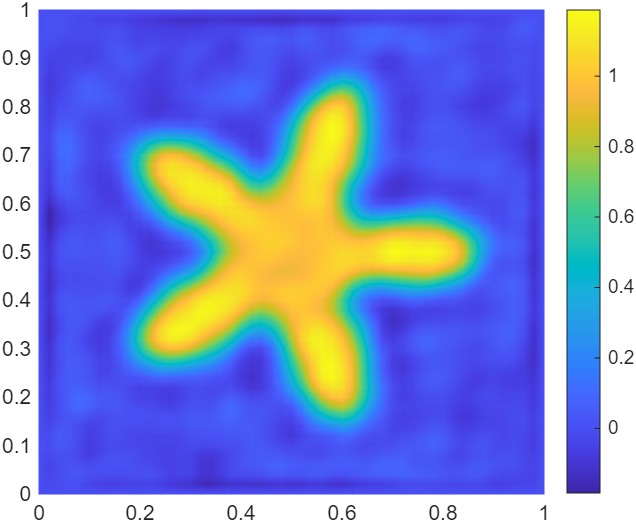} 
        \captionof{figure}{Recovered result by the POD with 1\% noise in \Cref{exam_relative_noise_star}}
        \label{star_noise_relative_pod_result}
    \end{minipage}
    \hspace{0.5cm}
    % \hfill % 使用 hfill 自动推到两边，比固定的 \hspace 更协调
    % 第二个 minipage 放表格
    % 使用 [t] 顶部对齐，宽度放大到 0.48 给长标题充足空间
    \begin{minipage}[t]{0.5\textwidth}
        \vspace{0pt} % 关键排版魔法：强制定义顶部基线
        \centering
        \captionof{table}{Comparison of computational time between FEM and POD for $f^*$ of star-shaped source in \Cref{exam_relative_noise_star}}
        \vspace{2mm} % 让表格和标题之间稍微透点气
        \begin{tabular}{lccc} 
        \toprule 
       Method    & Off. (s)  & Opti. (s) & Total (s) \\ \midrule
      FEM        & --        & 130.648   & 130.648   \\ \midrule
      POD        & 3.382     & 13.078    & 16.460    \\
      \bottomrule
        \end{tabular}
        \label{star_noise_relative_fem_pod_time_result}
    \end{minipage}
    
\end{figure}

\end{myexample}

\begin{myexample}\label{example_elliptic_pod_basis_num}
     
    % Following \Cref{example_lae_elliptic_pod_basis_num}, 
    We investigate how the reduced basis dimension $n$ affects the reconstruction of the $\Omega$-shaped source, with the regularization parameter fixed at  $\lambda = 6.2629 \times 10^{-10}$. 
    % and the penalty norm to $\|\cdot\|^2_X = \|\cdot \|_{L^2(\Omega)}^2$ in problem \cref{opti_problem}. 
    As shown in \Cref{Z_1_b,Z_1_c,Z_1_d,Z_1_e}, increasing $n$ from $1$ to $12$ progressively improves the reconstruction from a coarse approximation of macroscopic features to a high-fidelity recovery of fine-scale details. Quantitatively, the reconstruction error $\| f_{pod} - f^*\|_{L^2(\Omega)}$ plotted in \Cref{Z_1_f} decays rapidly and monotonically as $n$ increases. This trend confirms the excellent approximation performance and numerical convergence of the proposed POD-based approach for complex source identification.

    \begin{figure}[htbp]
        \centering
    % 第一个子图
    \begin{subfigure}[t]{0.3\textwidth} 
        \vspace{0pt} 
        \centering
        \includegraphics[width=\textwidth]{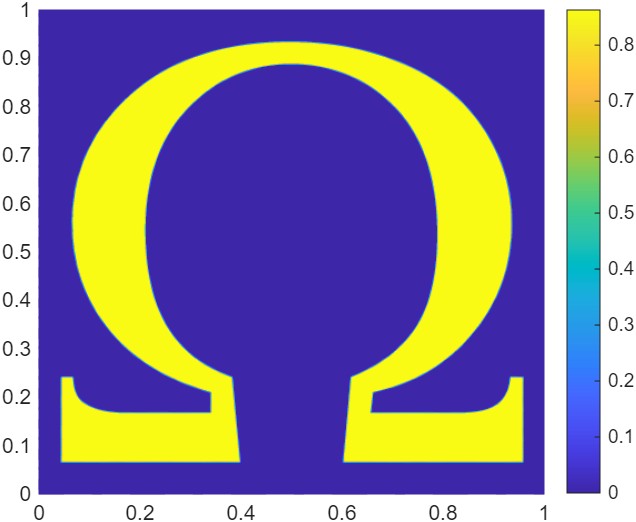}
        \caption{Exact source term} 
        \label{Z_a} 
    \end{subfigure}
    \hfill % 在两个子图之间添加水平间距，使其左右对齐
    % 第二个子图
    \begin{subfigure}[t]{0.3\textwidth}  % 同上，保证两个子图加起来不超过textwidth并留有间隙
    \vspace{0pt} 
        \centering
        \includegraphics[width=\textwidth]{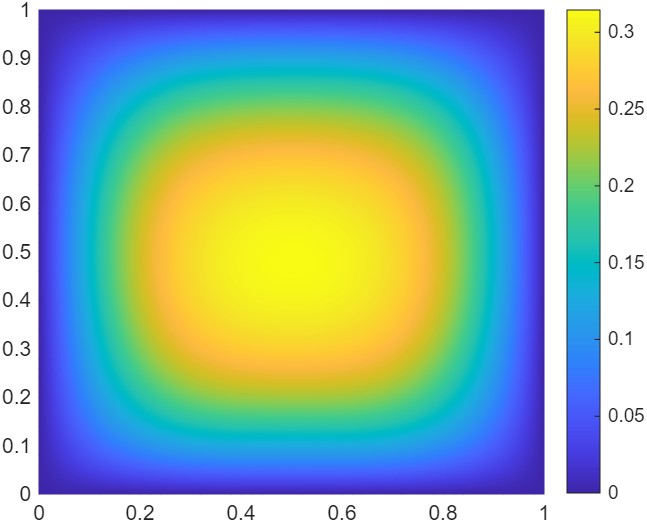}
        \caption{1 POD basis} % 子图的独立标题
        \label{Z_1_b} % 子图的独立标签
    \end{subfigure}
    \hfill % 在两个子图之间添加水平间距，使其左右对齐
    % 第三个子图
    \begin{subfigure}[t]{0.3\textwidth}  % 同上，保证两个子图加起来不超过textwidth并留有间隙
    \vspace{0pt} 
        \centering
        \includegraphics[width=\textwidth]{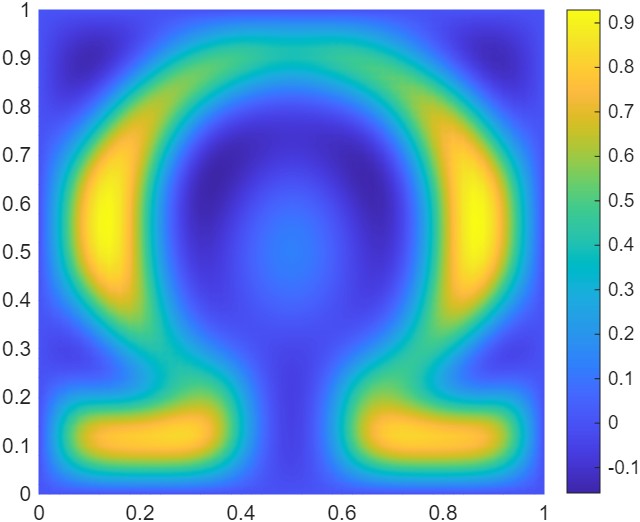}
        \caption{4 POD basis} % 子图的独立标题
        \label{Z_1_c} % 子图的独立标签
    \end{subfigure}
    % 第四个子图
    \begin{subfigure}[t]{0.3\textwidth}  % 同上，保证两个子图加起来不超过textwidth并留有间隙
    \vspace{0pt} 
        \centering
        \includegraphics[width=\textwidth]{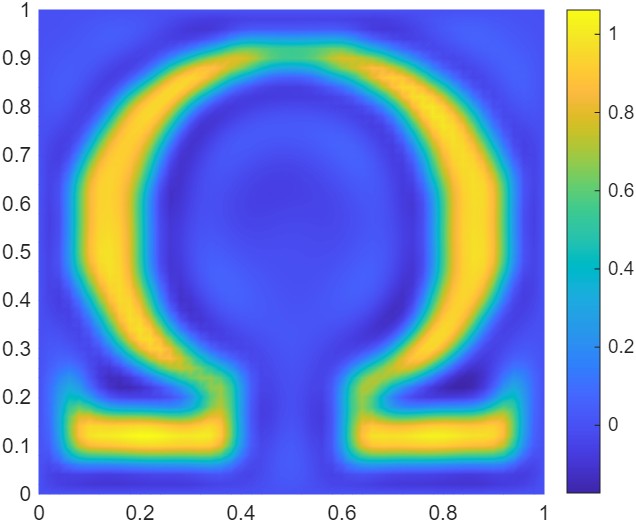}
        \caption{8 POD basis} % 子图的独立标题
        \label{Z_1_d} % 子图的独立标签
    \end{subfigure}
    \hfill
    % 第5个子图
    \begin{subfigure}[t]{0.3\textwidth}  % 同上，保证两个子图加起来不超过textwidth并留有间隙
    \vspace{0pt} 
        \centering
        \includegraphics[width=\textwidth]{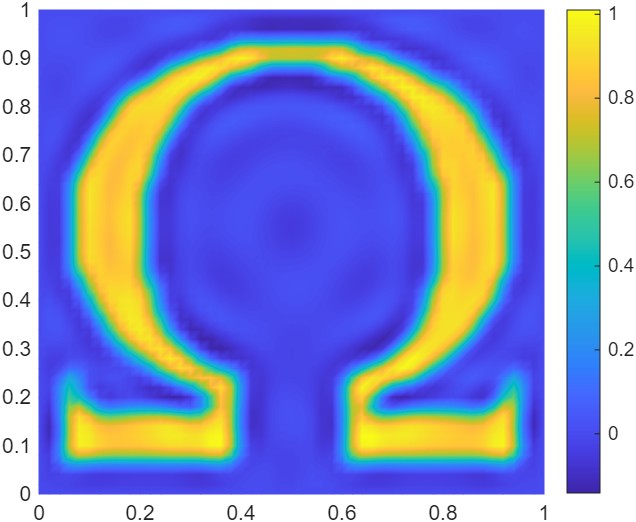}
        \caption{12 POD basis} % 子图的独立标题
        \label{Z_1_e} % 子图的独立标签
    \end{subfigure}
    \hfill
    % 第6个子图
    \begin{subfigure}[t]{0.3\textwidth}  % 同上，保证两个子图加起来不超过textwidth并留有间隙
    \vspace{0pt} 
        \centering
        \includegraphics[width=\textwidth]{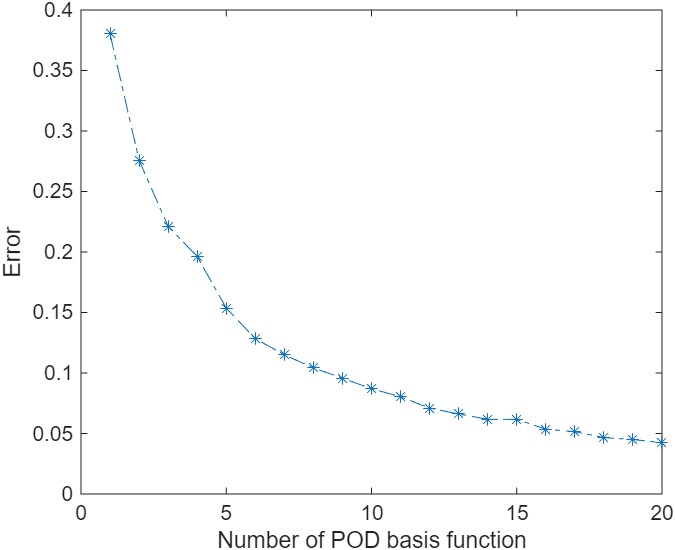}
        \caption{Error} % 子图的独立标题
        \label{Z_1_f} % 子图的独立标签
        \end{subfigure}
    % 整个图的共同标题
        \caption{Effects of the numbers of POD basis for \Cref{example_elliptic_pod_basis_num}}
        \label{Z_pod_number} % 整个图的共同标签
    \end{figure}
\end{myexample}

\subsection{Fredholm Integral Equations of the First Kind}
In this section, we consider the reconstruction of the source $f$ in a Fredholm integral equation of the first kind:
\begin{equation}\label{num_int_eq}
    (\mathcal{K}f)(y) := \int_{\Omega} K(y, x)f(x)dx = m(y), \quad y \in \Omega = (0, 1)^2,
\end{equation}
where we employ a Gaussian kernel $K(y, x) = \exp(-\|x - y\|^2 / \sigma^2)$ with $\sigma=0.05$.
% where $K(y, x)$ is a smooth integral kernel. In our numerical experiments, we specifically employ the Gaussian kernel $K(y, x) = \exp(-\|x - y\|^2 / \sigma^2)$ with $\sigma=0.05$. 
To generate the data $m$, the forward problem \eqref{num_int_eq} is discretized via the FDM on a $50 \times 50$ uniform mesh with grid size $h = 1/49$, and the solution is interpolated onto a $600 \times 600$ uniform sensor grid.
Following \Cref{sec_FIEFK}, the reduced basis is constructed using 400 snapshots from equation \eqref{eq_int_pod_basis_generate} over $(0, 1]$. These snapshots are computed using the same FDM spatial discretization and a backward Euler scheme with
with a time step $\Delta t = 1/400$. For the optimization problem \cref{opti_problem}, we define the penalty norm as $\|\cdot\|^2_X = \|\cdot \|_{L^2(\Omega)}^2$ and terminate the gradient descent when the error satisfies $\| f_{fdm} - f^*\|_{L^2(\Omega)}^2 \leq \varepsilon$ (or $\| f_{pod} - f^*\|_{L^2(\Omega)}^2 \leq \varepsilon$) for a prescribed tolerance  $\varepsilon$.

\begin{myexample}\label{int_eq_noise_performance}
    To evaluate the robustness of the proposed algorithm against data perturbations, we consider a C-shaped source. Reconstructions are performed under $1$\% and $2$\% noise levels using $10$ POD basis functions. For the $1$\% noise case, the regularization parameter is set to $\lambda = 6.3301 \times 10^{-7}$  with a tolerance of $\varepsilon = 2.32\times 10^{-2}$. For the $2$\% noise case, these parameters are adjusted to $\lambda = 2.3135 \times 10^{-6}$ and $\varepsilon = 3.48\times 10^{-2}$. As demonstrated in \Cref{C_compar_effectiveness_pod_fem}, the algorithm exhibits strong stability, accurately recovering the source structure despite the noise. The associated computational costs are summarized in \Cref{Tab_C_fem_pod}.
    
    % We evaluate the robustness of the proposed algorithm against data perturbations. Using the C-shaped source,
    % $f=sin(2\pi x) sin(2\pi y)$ and 15 POD basis functions,  
    % reconstructions are performed under 1\% and 2\% noise levels. The corresponding parameters are $\lambda = 6.3301 \times 10^{-7}$ and  $\lambda = 2.3135 \times 10^{-6}$ respectively, with a fixed tolerance of $\varepsilon = 2.32\times 10^{-2}$ and $\varepsilon = 3.48\times 10^{-2}$. As \Cref{C_compar_effectiveness_pod_fem} demonstrates, the algorithm maintains strong stability, accurately recovering the structure of the source even in the presence of significant noise. The corresponding computational costs are shown in \Cref{Tab_C_fem_pod}.

    \begin{figure}[htbp]
    \centering
    % 第一个子图
    \begin{subfigure}[t]{0.3\textwidth} 
        \vspace{0pt} 
        \centering
        \includegraphics[width=\textwidth]{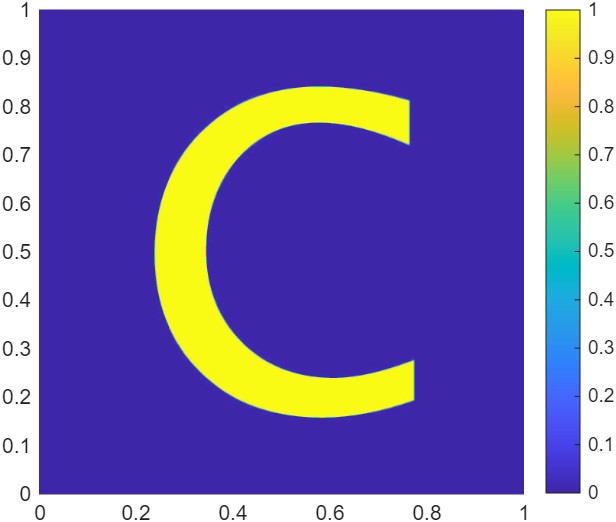}
        \caption{Exact source term} 
        % \label{star_exact} 
    \end{subfigure}
    \hfill % 在两个子图之间添加水平间距，使其左右对齐
    % 第二个子图
    \begin{subfigure}[t]{0.3\textwidth}  % 同上，保证两个子图加起来不超过textwidth并留有间隙
    \vspace{0pt} 
        \centering
        \includegraphics[width=\textwidth]{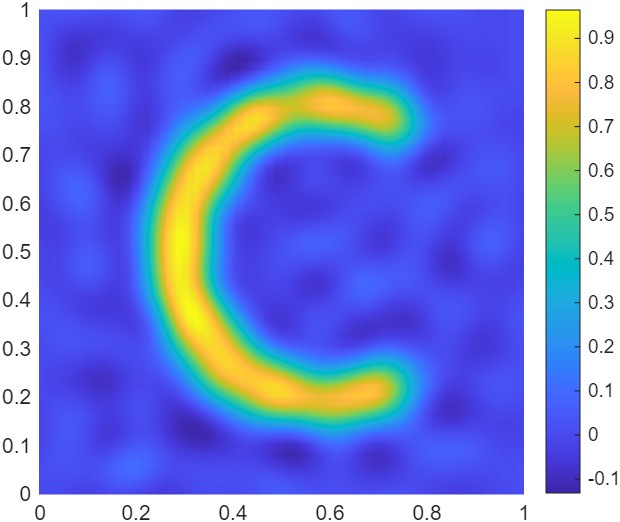}
        \caption{Recovered result by the FEM with 1\% noise} % 子图的独立标题
        % \label{star_fem} % 子图的独立标签
    \end{subfigure}
    \hfill % 在两个子图之间添加水平间距，使其左右对齐
    % 第三个子图
    \begin{subfigure}[t]{0.3\textwidth}  % 同上，保证两个子图加起来不超过textwidth并留有间隙
    \vspace{0pt} 
        \centering
        \includegraphics[width=\textwidth]{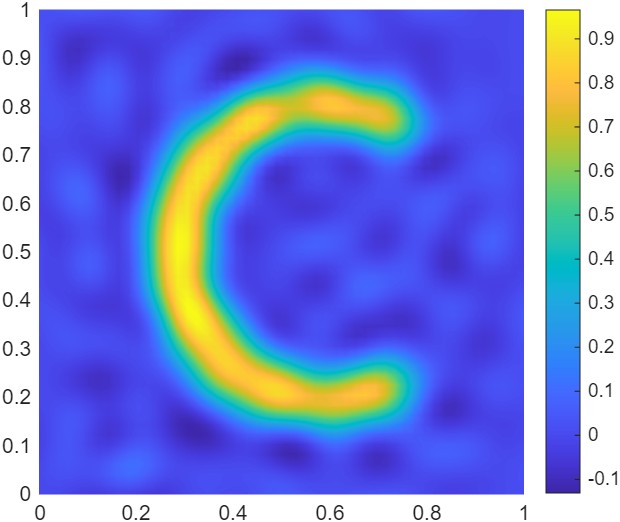}
        \caption{Recovered result by the POD with 1\% noise} % 子图的独立标题
        % \label{star_pod} % 子图的独立标签
    \end{subfigure}
    \vspace{6mm}
    % 第一个子图
    % \hfill
    % \begin{subfigure}[t]{0.3\textwidth} 
    %     \vspace{0pt} 
    %     \centering
    %     \includegraphics[width=\textwidth]{pictures/C.jpg}
    %     \caption{Exact source term} 
    %     % \label{star_exact} 
    % \end{subfigure}
    % \hfill % 在两个子图之间添加水平间距，使其左右对齐
    % 第二个子图
    \begin{subfigure}[t]{0.3\textwidth}  % 同上，保证两个子图加起来不超过textwidth并留有间隙
    \vspace{0pt} 
        \centering
        \includegraphics[width=\textwidth]{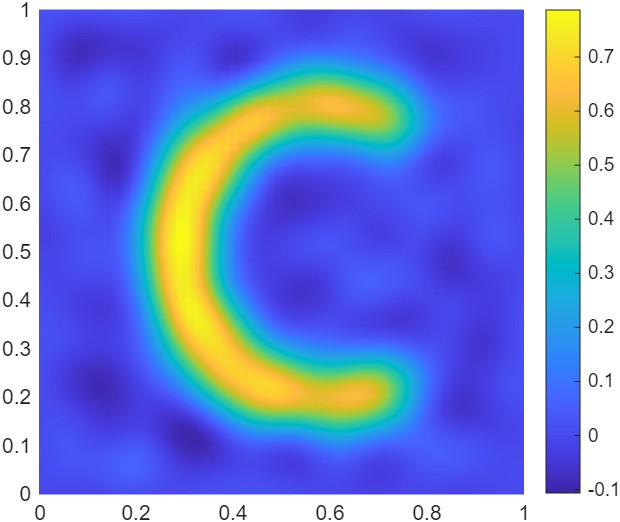}
        \caption{Recovered result by the FEM with 2\% noise} % 子图的独立标题
        % \label{star_fem} % 子图的独立标签
    \end{subfigure}
    \hfill % 在两个子图之间添加水平间距，使其左右对齐
    % 第三个子图
    \begin{subfigure}[t]{0.3\textwidth}  % 同上，保证两个子图加起来不超过textwidth并留有间隙
    \vspace{0pt} 
        \centering
        \includegraphics[width=\textwidth]{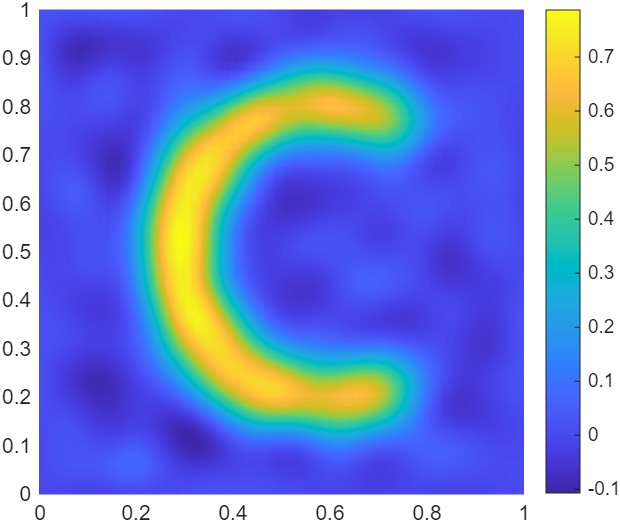}
        \caption{Recovered result by the POD with 2\% noise} % 子图的独立标题
        % \label{star_pod} % 子图的独立标签
    \end{subfigure}
     \hfill 
    \hspace*{0.3\textwidth}
    % 整个图的共同标题
    \caption{Comparison of FEM and POD reconstruction results for $f^*$ of C-shaped source in \Cref{int_eq_noise_performance}}
    \label{C_compar_effectiveness_pod_fem} % 整个图的共同标签
    \end{figure}

    \begin{table}[ht]
    \caption{ Comparison of computational time between FEM and POD for $f^*$ of C-shaped source in \Cref{int_eq_noise_performance}}
    \centering
    % \footnotesize
    % \normalsize 
    % \scalebox{1.2}{
    \begin{tabular}{lcccc} 
    \toprule % 加粗顶部横线（1.5pt 线宽）
       Method  & Noise-level   & Off. (s)                    & Opti. (s)     & Total (s)       \\ \midrule
      FDM         &    1\%             &-- & 243.247 & 243.247  \\ \midrule
      POD        &   1\%             & 2.693 &4.972   & 7.665 \\ \midrule
      FEM        &   2\%             & -- &126.810   & 126.810 \\ \midrule
      POD        &   2\%             & 3.014 &5.556   & 8.570  \\
      \bottomrule
    \end{tabular}
    \label{Tab_C_fem_pod}
    \end{table}

\end{myexample}

\begin{myexample}\label{int_eq_pod_basis_num}
    % Following \Cref{example_elliptic_pod_basis_num},
    In this example, we investigate the influence of the reduced basis dimension $n$ on the reconstruction of the $A$-shaped source with $\lambda = 6.1258 \times 10^{-11}$. As illustrated in \Cref{shan_parabolic_1_b,shan_parabolic_1_c,shan_parabolic_1_d,shan_parabolic_1_e}, increasing $n$ from $1$ to $12$ progressively refines the recovered result, transitioning from a coarse approximation to a high-fidelity representation. This visual improvement is quantitatively corroborated by the rapid and monotonic decay of the reconstruction error $\| f_{pod} - f^*\|_{L^2(\Omega)}$, as depicted in \Cref{shu_parabolic_1_f}.

    % Such striking convergence behavior firmly validates the computational efficiency of the proposed method.
    
    % Such rapid convergence validates the proposed method's efficiency.

    \begin{figure}[htbp]
        \centering
    % 第一个子图
    \begin{subfigure}[t]{0.3\textwidth} 
        \vspace{0pt} 
        \centering
        \includegraphics[width=\textwidth]{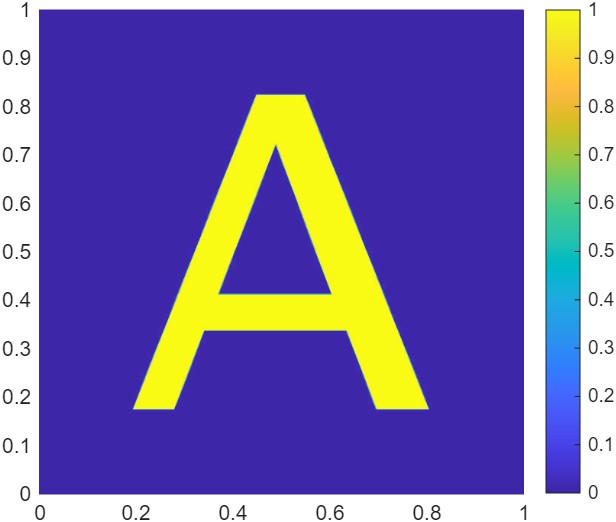}
        \caption{Exact source term} 
        \label{shan_parabolic_a} 
    \end{subfigure}
    \hfill % 在两个子图之间添加水平间距，使其左右对齐
    % 第二个子图
    \begin{subfigure}[t]{0.3\textwidth}  % 同上，保证两个子图加起来不超过textwidth并留有间隙
    \vspace{0pt} 
        \centering
        \includegraphics[width=\textwidth]{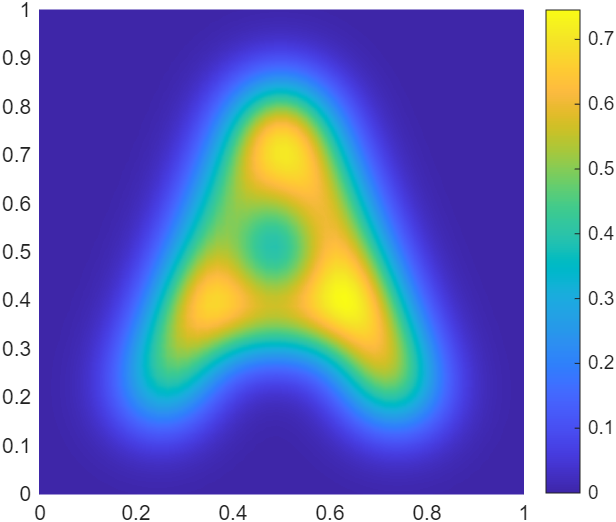}
        \caption{1 POD basis} % 子图的独立标题
        \label{shan_parabolic_1_b} % 子图的独立标签
    \end{subfigure}
    \hfill % 在两个子图之间添加水平间距，使其左右对齐
    % 第三个子图
    \begin{subfigure}[t]{0.3\textwidth}  % 同上，保证两个子图加起来不超过textwidth并留有间隙
    \vspace{0pt} 
        \centering
        \includegraphics[width=\textwidth]{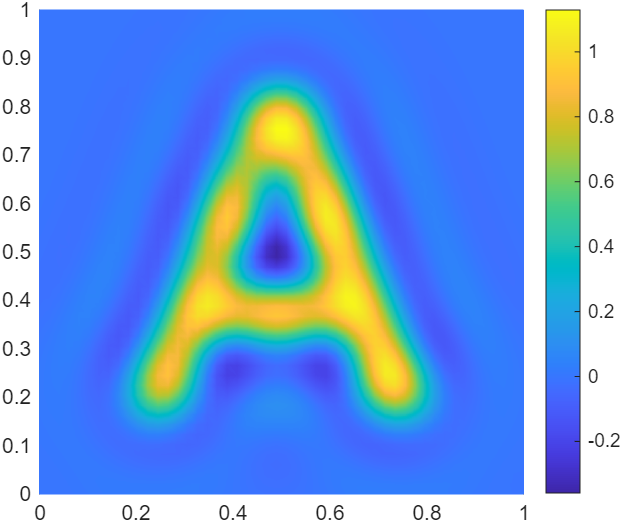}
        \caption{4 POD basis} % 子图的独立标题
        \label{shan_parabolic_1_c} % 子图的独立标签
    \end{subfigure}
    % 第四个子图
    \begin{subfigure}[t]{0.3\textwidth}  % 同上，保证两个子图加起来不超过textwidth并留有间隙
    \vspace{0pt} 
        \centering
        \includegraphics[width=\textwidth]{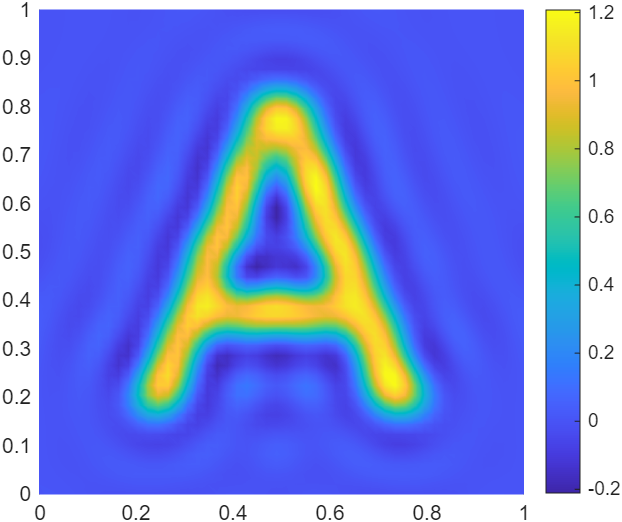}
        \caption{8 POD basis} % 子图的独立标题
        \label{shan_parabolic_1_d} % 子图的独立标签
    \end{subfigure}
    \hfill
    % 第5个子图
    \begin{subfigure}[t]{0.3\textwidth}  % 同上，保证两个子图加起来不超过textwidth并留有间隙
    \vspace{0pt} 
        \centering
        \includegraphics[width=\textwidth]{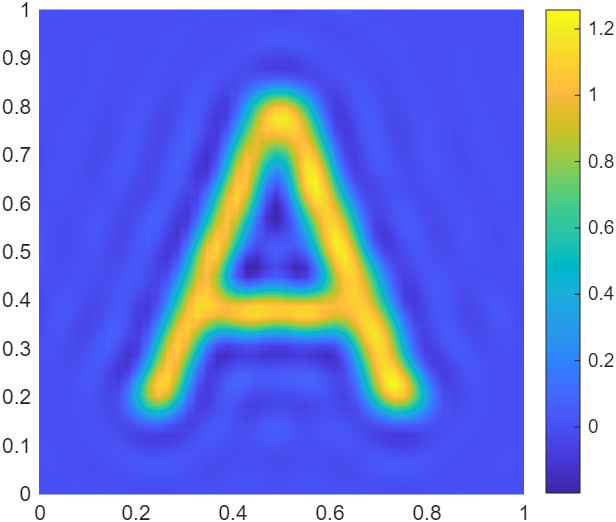}
        \caption{12 POD basis} % 子图的独立标题
        \label{shan_parabolic_1_e} % 子图的独立标签
    \end{subfigure}
    \hfill
    % 第6个子图
    \begin{subfigure}[t]{0.3\textwidth}  % 同上，保证两个子图加起来不超过textwidth并留有间隙
    \vspace{0pt} 
        \centering
        \includegraphics[width=\textwidth]{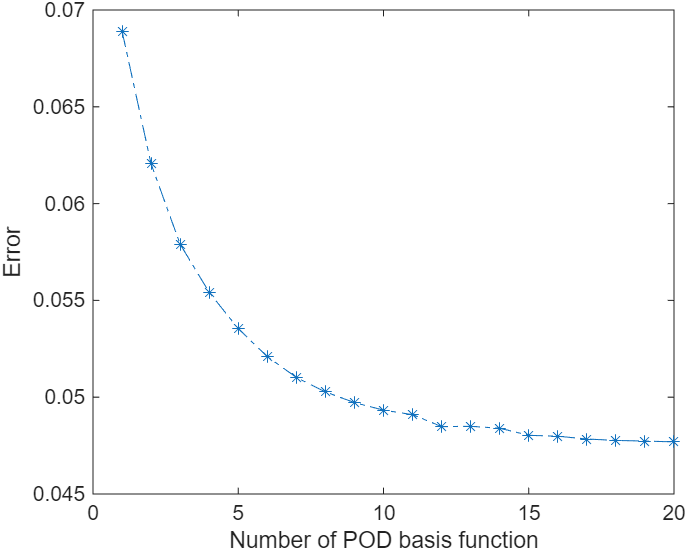}
        \caption{Error} % 子图的独立标题
        \label{shu_parabolic_1_f} % 子图的独立标签
        \end{subfigure}
    % 整个图的共同标题
        \caption{Effects of the numbers of POD basis for \Cref{int_eq_pod_basis_num}}
        \label{shan_pod_number_parabolic} % 整个图的共同标签
    \end{figure}
\end{myexample}

% \section{Conclusion}\label{sec_pod_concl}

\section{Conclusion and Extensions}\label{sec_pod_concl}

In this paper, we proposed a pseudo-time data-driven Proper Orthogonal Decomposition (POD) framework for model reduction of stationary linear operator equations lacking natural temporal snapshot data. By reformulating a static operator equation into a pseudo-time evolution problem, the proposed method successfully generates representative snapshots while inherently preserving the spectral structure of the underlying operator.

Theoretical analysis was provided to justify the proposed framework. In particular, we rigorously proved the exponential convergence of the pseudo-time solution to the exact stationary solution and established the approximation properties of the resulting POD basis functions. These results provide a solid mathematical foundation for the reduced-order approximations.

The effectiveness of the method was extensively demonstrated through two representative settings: elliptic inverse source problems and Fredholm integral equations of the first kind. Numerical results consistently show that accurate approximations can be obtained using only a significantly truncated set of POD basis functions. Furthermore, the method achieves substantial online acceleration compared with full-order FEM or FDM discretizations, and exhibits strong robustness against observational noise in inverse problem scenarios.

Despite these advantages, one limitation of the current framework is that the state or observation variable $u$ and the unknown variable $f$ are assumed to be approximated within the same POD space. While this assumption simplifies the construction of the reduced model, it may be restrictive in general problems, since $f$ and $u$ often exhibit different regularity, spatial scales, and physical structures.

To overcome this limitation, a natural extension is to construct separate POD spaces for $u$ and $f$,
\[
    V_{pod}^u=\operatorname{span}\{\psi_k^u\}_{k=1}^{N_u},
    \qquad
    V_{pod}^f=\operatorname{span}\{\psi_k^f\}_{k=1}^{N_f},
\]
and seek reduced approximations $u_r\in V_{pod}^u$ and
$f_r\in V_{pod}^f$. The two bases may be generated from the pseudo-time
systems
\[
\begin{cases}
    \tilde{u}_t+\mathcal{R}^{-1}\tilde{u}=m,\\[2mm]
    \tilde{u}(\cdot,0)=0,
\end{cases}
\qquad
\begin{cases}
    \tilde{f}_t+\mathcal{R}\tilde{f}=m,\\[2mm]
    \tilde{f}(\cdot,0)=0.
\end{cases}
\]

This decoupled construction provides greater flexibility and is expected to better capture the distinct smoothness and structural properties of $f$ and $u$. The rigorous mathematical analysis and numerical validation of this extended formulation will be investigated in our future work.

% \section{Prospect}
% \textbf{Extension}
% We can choose different POD basis functions for $u$ and $f$, respectively.

% We obtain the POD basis functions $V_{pod}^u=\operatorname{span}\{\psi^u_k\}_{k=1}^l$ using the following equation.
% \begin{equation}
%     \begin{aligned}
%         \left\{\begin{array}{ll}
%             \widetilde{u}_{t} + \mathcal{R}^{-1} \widetilde{u} =  m &\text { in }~ \Omega \times (0, T), \\[2mm] 
%             \widetilde{u}_0 = 0 &\text { in }~ \Omega.\end{array}\right.
%         \end{aligned}
% \end{equation}

% And we obtain the $V_{pod}^f=\operatorname{span}\{\psi^f_k\}_{k=1}^l$ by using the following equation:
% \begin{equation}
%     \begin{aligned}
%         \left\{\begin{array}{ll}
%             \widetilde{f}_{t} + \mathcal{R}\widetilde{f} =  m &\text { in }~ \Omega \times (0, T), \\[2mm] 
%             \widetilde{f}_0 = 0 &\text { in }~ \Omega.\end{array}\right.
%         \end{aligned}
% \end{equation}

% Both sets of the aforementioned POD basis functions are utilized to construct a low-rank approximation for the following least-squares problem.
% \begin{equation}
%     \min_{f\in V_{pod}^f}\|\mathcal{R}f-m(x)|\vert_{L^2}^2+\lambda \|f|\vert_{L^2}^2.
% \end{equation}
% The gradient of the above problem is:
% \begin{equation}
%     \mathcal{R}^*\mathcal{R}f+\lambda f=\mathcal{R}^*m(x).
% \end{equation}

% Since $\mathcal{R} f=u$, it follows that:
% \begin{equation}
%     \mathcal{R}^* u_{pod}+\lambda f_{pod}=\mathcal{R}^* m(x) \qquad u_{pod}\in V_{pod}^u,f_{pod}\in V_{pod}^f.
% \end{equation}

\appendix

% \section{Proper Orthogonal Decomposition (POD) Method}\label{sec_pod_method}

% \newpage
\renewcommand\refname{References}
\bibliographystyle{plain}
\bibliography{references}

%\bibliographystyleMath{unsrt}
%\bibliographyMath{refs-etc}

%\bibliographystylePhys{unsrt}
%\bibliographyPhys{refs-etc}

\end{sloppypar}
\end{document}